\newtheorem{theorem}{Theorem}
\newtheorem{lemma}{Lemma}
\newtheorem{definition}{Definition}
\newtheorem{proposition}{Proposition}
\icmltitlerunning{Gradient Projection Iterative Sketch for Large-Scale Constrained Least-Squares}
\begin{document} 

\twocolumn[
\icmltitle{Gradient Projection Iterative Sketch for Large-Scale Constrained Least-Squares}




\begin{icmlauthorlist}
\icmlauthor{Junqi Tang}{ed}
\icmlauthor{Mohammad Golbabaee}{ed}
\icmlauthor{Mike E. Davies}{ed}
\end{icmlauthorlist}

\icmlaffiliation{ed}{Institute for Digital Communications, the University of Edinburgh, Edinburgh, UK}

\icmlcorrespondingauthor{Junqi Tang}{J.Tang@ed.ac.uk}

\icmlkeywords{Sketching, Large data optimization}

\vskip 0.3in
]



\printAffiliationsAndNotice{}  

\begin{abstract} 

We propose a randomized first order optimization algorithm Gradient Projection Iterative Sketch (GPIS) and an accelerated variant for efficiently solving large scale constrained Least Squares (LS). We provide the first theoretical convergence analysis for both algorithms. An efficient implementation using a tailored line-search scheme is also proposed. We demonstrate our methods' computational efficiency compared to the classical accelerated gradient method, and the variance-reduced stochastic gradient methods through numerical experiments in various large synthetic/real data sets.

\end{abstract} 

\section{Introduction}
We are now in an era of boosting knowledge and large data. In our daily life we have various signal processing and machine learning applications which involve the problem of tackling a huge amount of data. These applications vary from Empirical Risk Minimization (ERM) for statistical inference, to medical imaging such as the Computed Tomography (CT) and Magnetic Resonance Imaging (MRI), channel estimation and adaptive filtering in communications, and in machine learning problems where we need to train a neural network or a classifier from a large amount of data samples or images. Many of these applications involve solving constrained optimization problems. In a large data setting a desirable algorithm should be able to simultaneously address high accuracy of the solutions, small amount of computations and high speed data storage.

Recent advances in the field of randomized algorithms have provided us with powerful tools for reducing the computation for large scale optimizations. From the latest literature we can clearly see two streams of randomized algorithms, the first stream is the stochastic gradient descent (SGD) and its variance-reduced variants \cite{2013_Johnson_Accelerating}\cite{2013_Konecny_Semi-stochastic}\cite{defazio2014saga}\cite{allen2016katyusha}. The stochastic gradient techniques are based on the computationally cheap unbiased estimate of the true gradients with progressively reduced estimation variance. Although there has been several works on SGD techniques for performing constrained optimization \cite{xiao2014proximal}\cite{konevcny2016mini}, to the best of our knowledge, there are no results highlighting the computational speed up one could achieve by exploiting the data structure promoted by the constraint set. 

This paper follows a second line of research and uses \emph{sketching} techniques, the crux of which is reducing the dimensionality of a large scale problem by random projections (e.g., sub-Gaussian matrices, Fast Johnson-Lindenstrauss Transforms (FJLT) \cite{2008_Ailon_Fast}\cite{2009_Ailon_Fast}, the Count Sketch \cite{clarkson2013low}, the Count-Gauss Sketch \cite{kapralov2016fake} or random sub-selection) so that the resulting sketched problem becomes computationally tractable. The meta-algorithms Classical Sketch (CS)\cite{mahoney2011randomized}\cite{drineas2011faster}\cite{2015_Pilanci_Randomized} and the Iterative Hessian Sketch (IHS) \cite{2016_Pilanci_Iterative} have been recently introduced for 
   solving efficiently large scale constrained LS problems which utilize the random sketching idea combined with the fact that solutions have low-dimensional structures such as sparsity in a properly-chosen dictionary, low-rank, etc. 
   
 
 \subsection{Main Contributions}

\begin{itemize}
    \item {\bfseries Novel first order solvers based on iterative sketches for constrained Least-squares}
    
    We propose a basic first order algorithm Gradient Projection Iterative Sketch (GPIS) based on the combination of the {\it Classical Sketch} \cite{2015_Pilanci_Randomized} and {\it Iterative Hessian Sketch} \cite{2016_Pilanci_Iterative} for efficiently solving the constrained Least-squares, and also an accelerated variant by applying Nesterov's acceleration scheme \cite{nesterov2007gradient}\cite{nesterov2013gradient}.
    \item {\bfseries Theoretical analysis for both GPIS and Acc-GPIS}
    
    Although there exists established theories for the sketching programs in \cite{2015_Pilanci_Randomized}\cite{2016_Pilanci_Iterative} which describes their estimation performance under the assumption that the sketched programs are solved exactly,  there is no theoretical analysis of the use of first order methods within this framework, where each of the sketched programs are only approximately solved. The paper is the first one to provide this convergence analysis.
    
    \item {\bfseries Structure exploiting algorithms}
    
    In related theoretical works in sketching \cite{2015_Pilanci_Randomized}\cite{2016_Pilanci_Iterative}, convex relaxation \cite{chandrasekaran2013computational}, and the Projected Gradient Descent (PGD) analysis \cite{2015_Oymak_Sharp} with greedy step sizes when the data matrix is a Gaussian map, researchers have discovered that the constraint set is able to be exploited to accelerate computation. In this paper's convergence analysis of the proposed algorithms (which have an inner loop and an outer loop), we show explicitly how the outer loop's convergence speed is positively influenced by the constrained set. \footnote{Meanwhile we can show empirically that the inner loop is also being able to choose an aggressive step size with respect to the constraint. This extra step-size experiment can be found in the supplementary material.}

    \item {\bfseries Sketched gradients versus stochastic gradients -- quality versus quantity}
    
    The proposed GPIS algorithm draws a different line of research for first order randomized algorithms from the SGD and its recently introduced variance-reduced variants such as SVRG \cite{2013_Johnson_Accelerating} and SAGA \cite{defazio2014saga} by utilizing randomized sketching techniques and deterministic iterations instead of the stochastic iterations. This approach leads to convenience in optimally choosing the step size by implementing line search because it follows the classical results and techniques in first order optimization. Although such stochastic gradient algorithms have good performance in terms of epoch counts when a small minibatch size is used, this type of measure does not consider at least three important aspects: 1) the computational cost of projection / proximal operator, 2) the modern computational devices are usually more suitable for vectorized / parallel computation,
    3) the operational efforts to access new data batches each iteration (note that the large data should be stored in large memories, which are usually slow). 
    
    It is well known that the small batch size in stochastic gradients usually leads to a greater demand on the number of iterations. In the cases where the  projection / proximal operator is costly to compute, for instance, if we wish to enforce sparsity in a transformed domain, or an analytical domain (total-variation), we would need to use a large batch size in order to control computation which generally would not be favorable for stochastic gradients techniques as they usually achieves best performance when small batch size is used. In this paper we have designed experiments to show the time efficiency of the sketched gradients with Count-sketch \cite{clarkson2013low} and an aggressive line-search scheme for near-optimal choice of step size each iteration \cite{nesterov2007gradient} compared to a mini-batched version of the SAGA algorithm \cite{defazio2014saga} and the accelerated full gradient method \cite{2009_Beck_Fast} in large scale constrained least-square problems.

\end{itemize}

\subsection{Background}
Consider a constrained Least-squares regression problem in the large data setting. We have the training data matrix $A \in \mathbb{R}^{n \times d}$ with $n > d$ and observation $y \in \mathbb{R}^n$. Meanwhile we restrict our regression parameter to a convex constrained set  $\mathcal{K}$ to enforce some desired structure such as sparsity and low-rank\footnote{In scenarios where we do not know the exact constraint $\mathcal{K}$, we may wish to use {\it regularized} least-squares instead of strict constraint. This paper focus on the constrained case and leave the extension for the proximal setting as future work.}:
	\begin{equation}\label{LS}
    x^\star=\arg\min_{x\in \mathcal{K}} \left\{f(x) := \|y-Ax\|^2_2 \right\}.
    \end{equation}
Then we define the error vector $e$ as:
\begin{equation}\label{eq:3}
   e = y - Ax^\star
\end{equation}
A standard first order solver for (\ref{LS}) is the projected gradient algorithm (we denote the orthogonal projection operator onto the constrained set $\mathcal{K}$ as $\mathcal{P_K}$):
\begin{equation} \label{eq:4}
    x_{j+1}=\mathcal{P}_\mathcal{K}(x_{j}-\eta A^T(Ax_{j}-y)).
\end{equation}
Throughout the past decade researchers proposed a basic meta-algorithm for approximately solving the Least-squares problem that we call the {\it Classical Sketch} (CS), see e.g. \cite{mahoney2011randomized} \cite{drineas2011faster} \cite{2015_Pilanci_Randomized}, which compresses the dimension of the LS and makes it cheaper to solve.  The Johnson-Lindenstrauss theory \cite{johnson1984extensions} \cite{dasgupta2003elementary} and the related topic of Compressed Sensing \cite{2006_Donoho_Compressed}\cite{candes2006stable}\cite{baraniuk2008simple} revealed that random projections can achieve stable embeddings of  
high dimensional data into lower dimensions and that the number of measurements required is proportional to the intrinsic dimensionality of data (as opposed to the ambient dimension) which is manifested in the set of constraints $\mathcal{K}$. This motivates replacing the original constrained LS problem with a sketched LS \cite{2015_Pilanci_Randomized}:

	\begin{equation}\label{CS}
    \hat{x}=\arg\min_{x\in \mathcal{K}}\left\{f_0(x) := \|Sy-SAx\|^2_2\right\},
    \end{equation}
where the sketching matrix $S \in \mathbb{R}^{m \times n}, m \ll n$ is a random projection operator which satisfies:

\begin{equation}\label{eq:6}
    E\left(\frac{S^TS}{m}\right)=I.
\end{equation}
When the embedding dimension $m$ is larger than a certain factor of the true solution's intrinsic dimension (measured through a statistical tool called the Gaussian Width \cite{2012_Chandrasekaran_Convex}), the {\it Classical Sketch} (\ref{CS}) ensures a robust estimation of $x^\star$ with a noise amplification factor compared to the estimator given by solving the original LS problem (\ref{LS}), and it has been shown that the smaller the embedding dimension $m$ is, the bigger the noise amplification factor will be. To get a sketching scheme for the scenarios where a high accuracy estimation is demanded, a new type of meta-algorithm {\it Iterative Hessian Sketch} (IHS) was introduced by Pilanci and Wainwright \cite{2016_Pilanci_Iterative}:
\begin{equation}\label{IHS}
\begin{aligned}
       x^{t+1}= \arg \min_{x \in \mathcal{K}} \{f_{t}(x) := &&\frac{1}{2m}\|S^{t}A(x-x^t)\|_2^2\\ &&-x^TA^T(y-Ax^t)\}.
\end{aligned}
\end{equation}
At the $t$th iteration of IHS a new sketch of the data matrix $S^{t}A$ and a full gradient $A^T(y-Ax^t)$ at the current estimate $x^t$ is calculated to form a new sketched least-square problem. By repeating this procedure the IHS will converge to the solution of the original problem (\ref{LS}) in  typically a small number of iterations. The iterative sketch essentially
corrects the noise amplification and enables $(1 + \epsilon)$ LS
accuracy in the order of $\log \frac{1}{\epsilon}$ outer loop iterations.

\section{Gradient Projection Iterative Sketch}

 \subsection{The Proposed Algorithms}
Here we consider the combination of CS with the first order PGD algorithm, the {\it Gradient Projection Classical Sketch} (GPCS):
\begin{equation} \label{eq:7}
    x_{i+1}=\mathcal{P}_\mathcal{K}(x_{i}-\eta (S^0A)^T(S^0Ax_{i}-S^0y)).
\end{equation}
Similarly we obtain the {\it Gradient Projection Iterative Hessian Sketch} (GPIHS) for solving IHS (\ref{IHS}):
\begin{equation} \label{eq:8}
    x_{i+1}=\mathcal{P}_\mathcal{K}(x_i-\eta((S^tA)^T(S^tA)(x_i-x^t)+mA^T(Ax^t-y)).
\end{equation}
Our proposed GPIS algorithm applies PGD to solve a sequence of sketched LS, starting with a CS step for a fast initialization, and then is followed by further iterations of IHS. We can observe from Algorithm 1 that sketches are constructed in the outer loop and within the inner loop we only need to access them. This property could be very useful when, for instance $A$ is stored in a slow speed memory and it is too large to be loaded at once into the fast memory, or in large scale image reconstruction problems such as CT where due to its prohibited size $A$ is constructed on the fly.
 Note that thanks to the sketching each inner iteration of GPIS is $\frac{n}{m}$ times cheaper than a full PGD iterate in terms of matrix-vector multiplication, so intuitively we can see that there is potential in Algorithm 1 to get computational gain over the standard first order solver PGD.
 
 \begin{algorithm}[tb]\label{A1}
   \caption{Gradient Projection Iterative Sketch --- $\mathcal{G}(m, [\eta], [k])$}
\begin{algorithmic}
   \STATE  Initialization: $x_0^0 = 0$
    \STATE Given $A \in \mathbb{R}^{n \times d}$, sketch size $m \ll n$
    \STATE Prior knowledge: the true solution $x$ belongs to set $\mathcal{K}$ 

    \STATE Run GPCS iterates (Optional): 
    \STATE Generate a random sketching matrix $S^0 \in \mathcal{{R}}^{m \times n}$
    \STATE Calculate $S^0A$, $S^0y$ 
   \FOR{$i=1$ {\bfseries to} $k_0$}
   \STATE        $x_{i+1}^0=\mathcal{P}_\mathcal{K}(x_{i}^0-\eta_{0,i} (S^0A)^T(S^0Ax_{i}^0-S^0y))$
   
   \ENDFOR
\STATE $x_0^1 = x_{k_0}^0$

\STATE Run GPIHS iterates

\FOR{$t=1$ {\bfseries to} $N$ }
     \STATE Calculate $g=A^T(Ax_0^t-y)$
     \STATE  Generate a random sketching matrix $S^t \in \mathcal{{R}}^{m \times n}$
     \STATE Calculate $A^t_s=S^tA$ 

   \FOR{$i=1$ {\bfseries to} $k_t$}
   \STATE               $ x_{i+1}^t=\mathcal{P}_\mathcal{K}(x_i^t-\eta_{t,i}(A_s^{t^T}A_s^t(x_i^t-x_0^t)+mg))$
   
   \ENDFOR

\STATE  $x_0^{t+1}=x_{k_t}^t$

\ENDFOR
\end{algorithmic}
\end{algorithm}

Since it is well-known that in convex optimization the standard first order method Projected/proximal gradient descent can be accelerated by Nesterov's acceleration scheme \cite{nesterov2007gradient} \cite{nesterov2013gradient} \cite{2009_Beck_Fast}, our Algorithm 1 has potential to be further improved by introducing Nesterov's acceleration. Here we propose Algorithm 2 -- {\it Accelerated Gradient Projection Iterative Sketch} (Acc-GPIS) which is based on the combination of the accelerated PGD and iterative sketching.

 \begin{algorithm}[tb]\label{A2}
   \caption{Accelerated Gradient Projection Iterative Sketch --- $\mathcal{A}(m, [\eta], [k])$}
\begin{algorithmic}

   \STATE  Initialization: $x_0^0 = 0$, $\tau_0 = 1$
    \STATE Given $A \in \mathbb{R}^{n \times d}$, sketch size $m \ll n$
    \STATE Prior knowledge: the true solution $x$ belongs to set $\mathcal{K}$ 

    \STATE Run GPCS iterates (Optional): 
    \STATE Generate a random sketching matrix $S^0 \in \mathcal{{R}}^{m \times n}$
    \STATE Calculate $S^0A$, $S^0y$ 
   \FOR{$i=1$ {\bfseries to} $k_0$}
   \STATE        $x_{i+1}^0=\mathcal{P}_\mathcal{K}(z_{i}^0-\eta_{0,i} (S^0A)^T(S^0Az_{i}^0-S^0y))$
    \STATE $\tau_i = (1 + \sqrt{1 + 4\tau_{i-1}^2})/2$ 
    \STATE {\bfseries Extrapolate} $z_{i+1}^0 = x_{i+1}^0 + \frac{\tau_{i-1} - 1}{\tau_i}(x_{i+1}^0 - x_{i}^0)$

   \ENDFOR
\STATE $x_0^1 = z_0^1 = x_{k_0}^0$

\STATE Run GPIHS iterates

\FOR{$t=1$ {\bfseries to} $N$ }
     \STATE Calculate $g=A^T(Ax_0^t-y)$
     \STATE  Generate a random sketching matrix $S^t \in \mathcal{{R}}^{m \times n}$
     \STATE Calculate $A^t_s=S^tA$ 
     \STATE $\tau_0 = 1$
   \FOR{$i=1$ {\bfseries to} $k_t$}
   \STATE               $x_{i+1}^t=\mathcal{P}_\mathcal{K}(z_i^t-\eta_{t,i}(A_s^{t^T}A_s^t(z_i^t-x_0^t)+mg))$
    \STATE $\tau_i = (1 + \sqrt{1 + 4\tau_{i-1}^2})/2$ 
    \STATE {\bfseries Extrapolate}  $z_{i+1}^t = x_{i+1}^t + \frac{\tau_{i-1} - 1}{\tau_i}(x_{i+1}^t - x_{i}^t)$
   \ENDFOR
\STATE $x_0^{t+1}= z_0^{t+1} =x_{k_t}^t$
\ENDFOR
\end{algorithmic}
\end{algorithm}

One of the benefits of deterministically minimising the sketched cost function can bring is that the implementation of the line-search scheme can be easy and provably reliable since the underlying sketched cost function each outer loop is fixed. For example \cite{nesterov2007gradient} provides a simple line-search scheme for gradient methods to make the step size of each iteration to be nearly optimal, with rigorous convergence theory and also a explicit bound for the number of additional gradient calls. The line-search scheme is described by Algorithm 3. On the other hand in the stochastic gradient literature there are no practical strategies for efficient line search in the case of constrained optimization. To the best of our knowledge, only the SAG paper \cite{schmidt2013minimizing} addresses the issue of line-search but their implementation is only for unconstrained optimization.

 \begin{algorithm}[tb]
   \caption{line-search scheme for GPIS and Acc-GPIS --- $\mathcal{L}(x_i, f_t(x), \triangledown f_t(x_i), \gamma_u, \gamma_d )$ \cite{nesterov2007gradient}}
\begin{algorithmic}

\STATE {\bfseries Input:} update $x_i$, sketched objective function $f_t(x)$, gradient vector $\triangledown f_t(x_i)$, line search parameters $\gamma_u$ and $\gamma_d$, step size of previous iteration $\eta_{i-1}$.
\STATE Define composite gradient map $m_L$:
\STATE $m_L := f_t(x_i) + (x - x_i)^T \triangledown f_t(x_i) + \frac{1}{2\eta}\|x - x_i\|_2^2$
\STATE $\eta = \gamma_d \eta_{i-1}$  
\STATE $x = \mathcal{P_K}(x_i - \eta \triangledown f_t(x_i))$

\WHILE{ $f_t(x) \geq m_L $ }

\STATE $\eta = \eta/\gamma_u$  
\STATE $x = \mathcal{P_K}(x_i - \eta \triangledown f_t(x_i))$

\ENDWHILE 

\STATE {\bfseries Return} $x_{i+1} = x$ and $\eta_i = \eta$
\end{algorithmic}
\end{algorithm}

\section{Convergence Analysis}

\subsection{General Theory}

We start our theoretical analysis by some definitions:
\begin{definition}
The Lipschitz constant $L$ and strong convexity $\mu$ for the LS (\ref{LS}) are defined as the largest and smallest singular values of the Hessian matrix $A^TA$:
\begin{equation}
    \mu \|z_d\|_2^2\leq \|Az_d\|_2^2 \leq L \|z_d\|_2^2,
\end{equation}
 for all $z_d \in \mathbb{R}^d$, where $0 \leq \mu < L$ ($\mu = 0$ means the LS (\ref{LS}) is non-strongly convex).
\end{definition}
\begin{definition}
Let $\mathcal{C}$ be the smallest closed cone at $x^\star$ containing the set $\mathcal{K}-x^\star$:
\begin{equation}
    \mathcal{C} = \left\{p \in \mathbb{R}^d |\  p = c(x - x^\star) , \forall c \geq 0, x \in \mathcal{K} \right\},
\end{equation}
$\mathbb{S}^{d-1}$ be the unit sphere in $\mathbb{R}^d$,
$\mathcal{B}^{d}$ be the unit ball in $\mathbb{R}^d$, $z$ be arbitrary fixed unit-norm vectors in $\mathbb{R}^n$. The factors $\alpha(\eta,S^tA)$,   $\rho(S^t,A)$ and $\sigma(S^t,A)$ are defined as:
\begin{align}
&\alpha(\eta_t,S^tA)=\sup_{u,v \in \mathcal{B}^d} v^T(I-\eta_t A^TS^{t^T}S^tA)u,\\
&\rho(S^t,A)=\frac{\sup_{v \in A\mathcal{C} \cap \mathbb{S}^{n-1}} v^T ( \frac{1}{m}S^{t^T}S^t-I)z}{\inf_{v \in A\mathcal{C} \cap \mathbb{S}^{n-1}} \frac{1}{m}\|S^tv\|_2^2},\\
&\sigma(S^t, A)= \frac{\sup_{v \in range(A) \cap \mathbb{S}^{n-1}} \|S^t v\|_2^2}{\inf_{v \in  range(A) \cap \mathbb{S}^{n-1}} \|S^t v\|_2^2},
\end{align}
\end{definition}

For convenience, we denote each of this terms as: $\alpha_t := \alpha(\eta_t,S^tA)$, $\rho_t := \rho(S^t,A)$ and $\sigma_t := \sigma(S^t, A)$. Our theory hangs on these three factors and we will show that they can be bounded with exponentially high probabilities for Gaussian projections.
\begin{definition}
The optimal points $x_\star^t$ of the sketch programs $f_t(x)$ are defined as:
\begin{equation}
    x_\star^t = \arg \min_{x \in \mathcal{K}} f_t(x).
\end{equation}
We also define a constant R for the simplicity of the theorems:
\begin{equation}
    R = \max_t \|x_\star^t - x^\star\|_2^2.
\end{equation}
\end{definition}
We use the notation $\|v\|_A = \|Av\|_2$ to describe the $A$-norm of a vector $v$ in our theory. After defining these properties we can derive our first theorem for GPIS when $f(x)$ is strongly convex, e.g, $\mu > 0$ :
\begin{theorem}
(Linear convergence of GPIS when $\mu > 0$) For fixed step sizes $\eta_t \leq \frac{1}{\|S^tA\|_2^2}$, the following bounds hold:
for $t = 0$ (the initialization loop by GPCS),
\begin{equation}
       \|x_0^1 - x^\star\|_A \leq (\alpha_0)^{k_0} \sqrt{\frac{L}{\mu}} \| x_0^0 - x_\star^0 \|_A + 2 \rho_0 \|e\|_2,
\end{equation}
for $N \geq 1$ and $x^t_0:=x^{t-1}_{k_{t-1}}$  
(the consecutive loops by GPIHS),
    \begin{equation}
       \|x_0^{N + 1} - x^\star\|_A 
       \leq  \left\{\prod_{t=1}^N\rho_t^\star\right\}\|x_0^1-x^\star\|_A;
    \end{equation}
where we denote:
\begin{equation}
    \rho_t^\star = (\alpha_t)^{k_t}\left[(1 + \rho_t)\sqrt{\frac{L}{\mu}}\right] + \rho_t
\end{equation}
\end{theorem}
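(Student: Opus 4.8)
The plan is to split the error at the end of each outer loop into an \emph{optimization} part, capturing how accurately the inner projected-gradient iterations solve the current sketched program, and a \emph{sketching} part, capturing how far the sketched minimiser $x_\star^t$ lies from the target $x^\star$. Everything is measured in the $A$-norm, and the engine is the triangle inequality
$$\|x_0^{t+1} - x^\star\|_A = \|x_{k_t}^t - x^\star\|_A \leq \|x_{k_t}^t - x_\star^t\|_A + \|x_\star^t - x^\star\|_A.$$
The first term is handled uniformly over $t$ by the inner-loop analysis; the second term is where the constraint set and the sketch quality enter, through $\rho_t$.

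For the optimization part I would first establish a per-step contraction of the inner loop towards $x_\star^t$. Since each sketched objective $f_t$ is quadratic with Hessian $A^TS^{t^T}S^tA$, the minimiser obeys the fixed-point identity $x_\star^t = \mathcal{P}_\mathcal{K}(x_\star^t - \eta_t\nabla f_t(x_\star^t))$, so subtracting this from the inner update and using nonexpansiveness of $\mathcal{P}_\mathcal{K}$ gives
$$\|x_{i+1}^t - x_\star^t\|_2 \leq \|(I - \eta_t A^TS^{t^T}S^tA)(x_i^t - x_\star^t)\|_2 \leq \alpha_t\|x_i^t - x_\star^t\|_2,$$
where the choice $\eta_t \leq 1/\|S^tA\|_2^2$ forces the eigenvalues of $\eta_t A^TS^{t^T}S^tA$ into $[0,1]$ and hence $\alpha_t = \|I - \eta_t A^TS^{t^T}S^tA\|_2 \leq 1$. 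Iterating $k_t$ times and converting norms via Definition~1 (using $\|Av\|_2 \leq \sqrt{L}\|v\|_2$ and $\|v\|_2 \leq \mu^{-1/2}\|v\|_A$, valid because $\mu>0$) yields the clean bound
$$\|x_{k_t}^t - x_\star^t\|_A \leq (\alpha_t)^{k_t}\sqrt{\tfrac{L}{\mu}}\,\|x_0^t - x_\star^t\|_A.$$

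For the sketching part I would invoke the Classical/Iterative-Hessian-Sketch estimation guarantees, packaged into the factor $\rho_t$. In the initialisation loop ($t=0$) the Classical Sketch controls the sketch-induced distortion of the residual, giving $\|x_\star^0 - x^\star\|_A \leq 2\rho_0\|e\|_2$, which combined with the inner-loop bound above proves the $t=0$ statement directly. For $t \geq 1$ the IHS subproblem is engineered so that its exact minimiser contracts the error relative to the anchor $x_0^t$, i.e. $\|x_\star^t - x^\star\|_A \leq \rho_t\|x_0^t - x^\star\|_A$. Granting this, I apply $\|x_0^t - x_\star^t\|_A \leq \|x_0^t - x^\star\|_A + \|x^\star - x_\star^t\|_A \leq (1+\rho_t)\|x_0^t - x^\star\|_A$ to the inner-loop bound and substitute into the triangle inequality to obtain the one-step factor
$$\|x_0^{t+1} - x^\star\|_A \leq \Big[(\alpha_t)^{k_t}(1+\rho_t)\sqrt{\tfrac{L}{\mu}} + \rho_t\Big]\|x_0^t - x^\star\|_A = \rho_t^\star\|x_0^t - x^\star\|_A.$$
Telescoping this recursion from $t=1$ to $N$ gives the stated product bound.

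The main obstacle is the IHS contraction $\|x_\star^t - x^\star\|_A \leq \rho_t\|x_0^t - x^\star\|_A$ (and its $t=0$ Classical-Sketch analogue); the rest is routine projected-gradient bookkeeping and triangle inequalities. The delicate step is the variational argument that produces $\rho_t$: I would write the first-order optimality inequalities for $x_\star^t$ and for $x^\star$ over $\mathcal{K}$, subtract them, and bound the resulting cross term $\langle(\tfrac1m S^{t^T}S^t - I)\,\cdot\,,\cdot\rangle$ by the numerator of $\rho_t$ while lower-bounding the sketched quadratic by its denominator. Making this rigorous requires verifying that the relevant difference directions $A(x_\star^t - x^\star)$ lie in $A\mathcal{C}\cap\mathbb{S}^{n-1}$, so that the cone-restricted suprema and infima defining $\rho_t$ legitimately apply, and tracking the multiplicative constant in the Classical-Sketch case.
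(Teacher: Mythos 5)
Your proposal is correct and follows essentially the same route as the paper: the same triangle-inequality decomposition into an inner-loop optimization error and a sketching error, the same $\sqrt{L/\mu}$ norm conversion, the same invocation of the Classical/Iterative Hessian Sketch guarantees for $\rho_0$ and $\rho_t$, the same $(1+\rho_t)$ trick, and the same telescoping. The only cosmetic difference is in deriving the inner-loop contraction: you use the fixed-point identity for $x_\star^t$ plus nonexpansiveness of $\mathcal{P}_\mathcal{K}$, whereas the paper runs the cone-projection supremum argument of Oymak et al.\ and then relaxes the cone to the full unit ball, so both land on the same $\alpha_t = \sup_{u,v\in\mathcal{B}^d} v^T(I-\eta_t A^TS^{t^T}S^tA)u$.
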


From Theorem 1 we can see that when we have strong convexity, aka $\mu > 0$, by choosing a appropriate step size the GPCS loop will linearly converge to a sub-optimal solution, the accuracy of which depends on the value of $2 \rho_0 \|e\|_2$; and the following GPIHS iterations enjoys a linear convergence towards the optimal point.

When the least-squares solution is relatively consistent ($\|e\|_2$ is small), the GPCS loop will provide excellent initial convergence speed, otherwise it is not beneficial -- that's why we say that the GPCS loop is optional for our GPIS / Acc-GPIS algorithm. For regression problems on data sets, we advise not to run the GPCS iterates, but for signal/image processing applications, we would recommend it.

For the cases where the strong convexity is not guaranteed ($\mu \geq 0$) we show the $\mathcal{O}(\frac{1}{k})$ convergence rate for GPIS algorithm:

\begin{theorem}
(Convergence guarantee for GPIS when $\mu \geq 0$) If we choose a fixed number ($k$) of inner-loops for $t = 1,...,N$, the following bounds hold: for $t = 0$,
\begin{equation}
       \|x_0^1 - x^\star\|_A \leq \sqrt{\frac{\beta L\sigma_0 R}{2k_0}} + 2 \rho_0 \|e\|_2,
\end{equation}
for $N \geq 1$ and $x^t_0:=x^{t-1}_{k}$  
\begin{equation}
\begin{aligned}
       \|x_0^{N + 1} - x^\star\|_A &\leq \left\{\prod_{t = 1}^N \rho_t \right\}\| x_0^1 - x^\star \|_A\\
       &+ \frac{\max_t \sqrt{\sigma_t}}{1 - \max_t \rho_t} \sqrt{\frac{\beta LR}{2k}},    
\end{aligned}
\end{equation}   
where $\beta = 1$ for fixed step sizes $\eta_t = \frac{1}{\|S^tA\|_2^2}$, $\beta = \gamma_u$ for a line search scheme described by Algorithm 3 with parameter $\gamma_u > 1$ and $\gamma_d = 1$.
\end{theorem}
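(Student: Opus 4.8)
The plan is to mirror the two-level structure already used for Theorem~1, but to replace the geometric inner-loop contraction (which relied on $\mu>0$) by the sublinear $\mathcal{O}(1/k)$ rate of projected gradient descent on a smooth convex quadratic. At each outer iteration $t$ I would split the error with the triangle inequality in the $A$-norm,
\begin{equation}
\|x_0^{t+1}-x^\star\|_A=\|x_{k}^{t}-x^\star\|_A\leq \|x_{k}^{t}-x_\star^t\|_A+\|x_\star^t-x^\star\|_A,
\end{equation}
so that the first term measures how well the inner loop minimizes the sketched program $f_t$ and the second measures the bias of the sketch itself. The second term is exactly what was controlled in deriving Theorem~1: for the GPIHS loops one has $\|x_\star^t-x^\star\|_A\leq\rho_t\|x_0^t-x^\star\|_A$, and for the $t=0$ GPCS loop the classical-sketch bias contributes the non-decaying noise term $2\rho_0\|e\|_2$. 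I would reuse these bounds verbatim.

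The genuinely new ingredient is the first term. Since $f_t$ is a sketched quadratic with Hessian $\tfrac1m (S^tA)^TS^tA$ and smoothness constant $L_t=\tfrac1m\|S^tA\|_2^2$, the standard projected-gradient estimate gives $f_t(x_k^t)-f_t(x_\star^t)\leq \tfrac{\beta L_t\|x_0^t-x_\star^t\|_2^2}{2k}$, where $\beta=1$ for the fixed step $\eta_t=1/\|S^tA\|_2^2$ and $\beta=\gamma_u$ absorbs the overshoot of the Algorithm~3 backtracking. To turn this objective gap into an $A$-norm distance I would invoke the constrained optimality condition $\langle\nabla f_t(x_\star^t),x-x_\star^t\rangle\geq0$ valid for all $x\in\mathcal K$, which for the quadratic $f_t$ produces the one-sided bound $f_t(x)-f_t(x_\star^t)\geq\tfrac1{2m}\|S^tA(x-x_\star^t)\|_2^2$. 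Feeding in $mL_t=\|S^tA\|_2^2$ and passing from the sketched norm $\|S^tA\,\cdot\|_2$ to $\|A\,\cdot\|_2=\|\cdot\|_A$ through the conditioning factor $\sigma_t$ (the ratio of the supremum to the infimum of $\|S^tv\|_2^2$ over $v\in\mathrm{range}(A)\cap\mathbb S^{n-1}$) then yields $\|x_k^t-x_\star^t\|_A\leq\sqrt{\sigma_t}\sqrt{\tfrac{\beta LR}{2k}}$, once $\|x_0^t-x_\star^t\|_2^2$ is bounded uniformly by $R$.

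With both pieces in hand the per-loop estimate becomes the affine recursion $d_{t+1}\leq\rho_t d_t+\sqrt{\sigma_t}\sqrt{\tfrac{\beta LR}{2k}}$ for $d_t:=\|x_0^t-x^\star\|_A$. Unrolling it from $t=1$ to $N$ gives $d_{N+1}\leq\big(\prod_{t=1}^N\rho_t\big)d_1+\sum_{t=1}^N\big(\prod_{s=t+1}^N\rho_s\big)\sqrt{\sigma_t}\sqrt{\tfrac{\beta LR}{2k}}$, and bounding every factor by $\max_t\rho_t<1$ and $\max_t\sqrt{\sigma_t}$ collapses the tail through the geometric series $\sum_{j\geq0}(\max_t\rho_t)^j=1/(1-\max_t\rho_t)$, producing exactly the stated additive constant $\tfrac{\max_t\sqrt{\sigma_t}}{1-\max_t\rho_t}\sqrt{\tfrac{\beta LR}{2k}}$. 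The $t=0$ line is the same computation run once on the classical sketch with $k_0$ and $\sigma_0$ in place of $k$ and $\sigma_t$.

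I expect the main obstacle to be the conversion from objective suboptimality to the $A$-norm distance in the absence of strong convexity: unlike Theorem~1 there is no global quadratic-growth constant $\mu$ to invoke, so I must lean on the exact quadratic form of $f_t$ together with the constrained optimality condition at $x_\star^t$ to manufacture the one-sided lower bound, and then control the change of geometry from $\|S^tA\,\cdot\|_2$ back to $\|A\,\cdot\|_2$ purely through $\sigma_t$. A secondary but necessary piece of bookkeeping is justifying the uniform bound of the inner-loop initial distances by $R$ (the role of that constant in the statement) and verifying that the line search of Algorithm~3 inflates the effective smoothness by at most $\gamma_u$, which is what legitimises setting $\beta=\gamma_u$ in the non-strongly-convex bound.
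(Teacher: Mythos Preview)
Your proposal is correct and follows essentially the same route as the paper: the triangle-inequality split in the $A$-norm, the sketch-bias bounds $\|x_\star^t-x^\star\|_A\leq\rho_t\|x_0^t-x^\star\|_A$ and $\|x_\star^0-x^\star\|_A\leq 2\rho_0\|e\|_2$, the quadratic-growth lower bound $f_t(x)-f_t(x_\star^t)\geq c\|S^tA(x-x_\star^t)\|_2^2$ obtained from first-order optimality, the Beck--Teboulle $\mathcal{O}(1/k)$ objective bound, and the passage from $\|S^tA\,\cdot\|_2$ to $\|\cdot\|_A$ via $\sigma_t$ are all exactly what the paper does. You are in fact more explicit than the paper about unrolling the affine recursion and summing the geometric series (the paper simply says ``by simply towering the inequalities''), and you correctly flag the bookkeeping issue that the standard PGD bound involves $\|x_0^t-x_\star^t\|_2^2$ whereas $R$ is defined as $\max_t\|x_\star^t-x^\star\|_2^2$; the paper glosses over this point.
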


For the Accelerated GPIS algorithm we also prove the desired $\mathcal{O}(\frac{1}{k^2})$ convergence rate:

\begin{theorem}
(Convergence guarantee for Accelerated GPIS when $\mu \geq 0$) If we choose a fixed number ($k$) of inner-loops for $t = 1,...,N$, the following bounds hold:
for $t = 0$ ,
\begin{equation}
       \|x_0^1 - x^\star\|_A \leq \sqrt{\frac{2\beta L\sigma_0 R}{(k_0 + 1)^2}} + 2 \rho_0 \|e\|_2,
\end{equation}
for $N \geq 1$ and $x^t_0:=x^{t-1}_{k}$  
\begin{equation}
\begin{aligned}
       \|x_0^{N + 1} - x^\star\|_A &\leq \left\{\prod_{t = 1}^N \rho_t \right\}\| x_0^1 - x^\star \|_A\\
       &+ \frac{\max_t \sqrt{\sigma_t}}{1 - \max_t \rho_t} \sqrt{\frac{2\beta LR}{(k + 1)^2}},
\end{aligned}
\end{equation}   
where $\beta = 1$ for fixed step sizes $\eta_t = \frac{1}{\|S^tA\|_2^2}$, $\beta = \gamma_u$ for a line search scheme described by Algorithm 3 with parameter $\gamma_u > 1$ and $\gamma_d = 1$.
\end{theorem}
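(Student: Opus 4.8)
The plan is to reduce the theorem to a single per-outer-loop recursion of the form $a_{t+1} \le \rho_t a_t + c_t$, with $a_t := \|x_0^t - x^\star\|_A$ and $c_t := \sqrt{\sigma_t}\,\sqrt{2\beta LR/(k+1)^2}$, and then to unroll it. Unrolling from $t=1$ to $N$ gives $a_{N+1} \le \left(\prod_{t=1}^N \rho_t\right) a_1 + \sum_{t=1}^N \left(\prod_{s=t+1}^N \rho_s\right) c_t$; bounding $\rho_s \le \max_t \rho_t$ and $c_t \le \max_t\sqrt{\sigma_t}\,\sqrt{2\beta LR/(k+1)^2}$ and summing the resulting geometric series via $\sum_{j\ge 0}(\max_t\rho_t)^j = 1/(1-\max_t\rho_t)$ reproduces exactly the stated bound. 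So the whole argument rests on the one-step recursion. The $t=0$ line is the same argument applied to the classical-sketch objective $f_0$, whose constrained minimizer obeys the CS noise bound $\|x_\star^0 - x^\star\|_A \le 2\rho_0\|e\|_2$, plus the inner-loop term $\sqrt{\sigma_0}\,\sqrt{2\beta LR/(k_0+1)^2}$.

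For the one-step recursion I would split by the triangle inequality $\|x_0^{t+1}-x^\star\|_A \le \|x_\star^t - x^\star\|_A + \|x_0^{t+1}-x_\star^t\|_A$, where $x_0^{t+1}=x_k^t$ is the inner-loop output and $x_\star^t$ is the exact minimizer of $f_t$ over $\mathcal{K}$. The first term is the exact-solve IHS contraction $\|x_\star^t-x^\star\|_A \le \rho_t\|x_0^t-x^\star\|_A$, which follows from the constrained optimality conditions of $f_t$ together with the definition of $\rho_t$, in the same way the $\rho_t$ factor enters Theorems 1--2; crucially this part is independent of the inner solver, hence identical to the unaccelerated case. The second term is the inner-loop optimisation error, and this is the only place where the accelerated scheme changes the argument.

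To bound $\|x_0^{t+1}-x_\star^t\|_A$ I would invoke the standard accelerated proximal-gradient (FISTA) guarantee for the convex composite problem $\min_{x\in\mathcal{K}} f_t(x)$, whose smooth part has gradient Lipschitz constant $\|S^tA\|_2^2$ and whose nonsmooth part is the indicator of $\mathcal{K}$; with step size $\eta_t = 1/\|S^tA\|_2^2$ (so $\beta=1$) this yields the $\mathcal{O}(1/k^2)$ decay $f_t(x_k^t) - f_t(x_\star^t) \le 2\|S^tA\|_2^2\,\|x_0^t - x_\star^t\|_2^2/(k+1)^2$. I then convert this into the $A$-norm in two steps: first, constrained strong convexity of the quadratic $f_t$ gives $\tfrac12\|S^tA(x_k^t - x_\star^t)\|_2^2 \le f_t(x_k^t) - f_t(x_\star^t)$; second, since $A(x_k^t-x_\star^t)\in\mathrm{range}(A)$, the definition of $\sigma_t$ gives $\|x_k^t-x_\star^t\|_A^2 = \|A(x_k^t-x_\star^t)\|_2^2 \le \|S^tA(x_k^t-x_\star^t)\|_2^2 \big/ \inf_{v\in\mathrm{range}(A)\cap\mathbb{S}^{n-1}}\|S^tv\|_2^2$. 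Combining these with $\|S^tA\|_2^2 \le L\sup_{v\in\mathrm{range}(A)\cap\mathbb{S}^{n-1}}\|S^tv\|_2^2$ collapses the two spectral ratios into the single factor $L\sigma_t$, and bounding $\|x_0^t-x_\star^t\|_2^2 \le R$ gives $\|x_0^{t+1}-x_\star^t\|_A \le \sqrt{\sigma_t}\,\sqrt{2\beta LR/(k+1)^2}$. The line-search case follows by observing that Algorithm 3 with $\gamma_d=1$ terminates at an effective reciprocal step size no larger than $\gamma_u\|S^tA\|_2^2$, which simply multiplies the FISTA bound by $\gamma_u$, giving $\beta=\gamma_u$.

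The main obstacle I anticipate is the norm bookkeeping in the conversion step: keeping the constants consistent across the FISTA function-value bound, the sketched quantity $\|S^tA\,\cdot\|_2$, and the target $A$-norm, and making the two spectral ratios combine cleanly into $L\sigma_t$ with the advertised constant. A secondary technical point is justifying the FISTA $\mathcal{O}(1/k^2)$ rate for the \emph{constrained} (indicator-regularised) objective with the precise extrapolation sequence $\tau_i$ of Algorithm 2, and verifying that restricting all estimates to $\mathrm{range}(A)$ and to the cone $\mathcal{C}$ is legitimate, so that $\sigma_t$ and $\rho_t$ rather than ambient condition numbers appear in the final bound.
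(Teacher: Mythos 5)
Your proposal is correct and follows essentially the same route as the paper's proof: the triangle-inequality split $\|x_0^{t+1}-x^\star\|_A \le \|x_0^{t+1}-x_\star^t\|_A + \rho_t\|x_0^t-x^\star\|_A$ using the exact-solve IHS contraction, the FISTA $\mathcal{O}(1/k^2)$ function-value bound with Lipschitz constant $L\sup_{v\in\mathrm{range}(A)\cap\mathbb{S}^{n-1}}\|S^tv\|_2^2$ converted to the $A$-norm via the quadratic growth of $f_t$ and the definition of $\sigma_t$, and the final unrolling with the geometric series $1/(1-\max_t\rho_t)$ (which the paper calls ``towering the inequalities''). Your bookkeeping is if anything slightly more careful than the paper's (e.g.\ retaining the factor $\tfrac12$ in the quadratic identity for $f_t$), so no gaps to report.
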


We include the proofs in our supplementary material. 
It is well known that for the case $\mu > 0$, the accelerated gradients can potentially enjoy the improved linear rate $\mathcal{O}((1 - \sqrt{\frac{\mu}{L}}))$ but it demands the exact knowledge of the value $\mu$ (which is often unavailable in practical setups). In our implementation for the Acc-GPIS method in the experiments, we use the adaptive {\it gradient restart} scheme proposed by \cite{o2015adaptive}.

\subsection{Explicit Bounds for Gaussian Sketches}

The theorems above provide us with a framework to describe the convergence of GPIS and Acc-GPIS in terms of the constants $\alpha$, $\rho$ and $\sigma$. For Gaussian sketches, these constants find explicit bounding expressions in terms of the sketch size $m$ and the complexity of the constraint cone $\mathcal{C}$. 
For this, we use the Gaussian Width argument (see, e.g. \cite{2012_Chandrasekaran_Convex}):

\begin{definition} 
The Gaussian width $\mathcal{W}(\Omega)$ is a statistical measure of the size of a set $\Omega$:
\begin{align}
    \mathcal{W}(\Omega) = E_g\left( \sup_{v \in \Omega} v^Tg\right),
\end{align}
where $g \in \mathbb{R}^n$ is draw from i.i.d. normal distribution.
\end{definition}
The value of $\mathcal{W}(\mathcal{C}\cap\mathbb{S}^{d-1})$ is an useful measure of the tightness of the structure of $x^\star$. For example, if $x^\star$ is $s$-sparse and we model the sparsity constraint using an $l_1$ ball, we will have $\mathcal{W}(\mathcal{C}\cap\mathbb{S}^{d-1}) \leq \sqrt{2slog(\frac{d}{s}) + \frac{5}{4}s}$, which means the sparser $x^\star$ is, the smaller the $\mathcal{W}(\mathcal{C}\cap\mathbb{S}^{d-1})$ will be \cite{2012_Chandrasekaran_Convex}. As an illustration we now quantify the bounds in our general theorems in terms of the sketch size $m$ and the Gaussian width of the transformed cone  $\mathcal{W}(A\mathcal{C}\cap\mathbb{S}^{n-1}) \leq \sqrt{d}$, and the ambient dimension of the solution domain ($d$). Now we are ready to provide the explicit bounds for the factors $\alpha_t$, $\rho_t$ and $\sigma_t$ for the general theorems (we denotes $b_m :=\sqrt{2}\frac{\Gamma(\frac{m+1}{2})}{\Gamma(\frac{m}{2})}\approx\sqrt{m}$ \cite{2015_Oymak_Sharp} and $\mathcal{W} := \mathcal{W}(A\mathcal{C}\cap\mathbb{S}^{n-1})$ for the following lemmas):

\begin{proposition}

If the step-size $\eta_t = \frac{1}{L(b_m + \sqrt{d} + \theta)^2}$, sketch size $m$ satisfies $b_m > \sqrt{d}$, and  the entries of the sketching matrix $S^t$ are i.i.d drawn from Normal distribution, then:
\begin{equation}
    \alpha_t \leq \left\{1-\frac{\mu}{L}\frac{(b_m-\sqrt{d}- \theta)^2}{(b_m+\sqrt{d}+ \theta)^2}\right\},
\end{equation}
with probability at least $(1-2e^{-\frac{\theta^2}{2}})$.
\end{proposition}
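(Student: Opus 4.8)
The plan is to recognize $\alpha_t$ as an operator norm and then control it through the extreme singular values of the sketched matrix $S^tA$. Observe that $M := I - \eta_t A^TS^{t^T}S^tA = I - \eta_t (S^tA)^T(S^tA)$ is symmetric, so the bilinear supremum defining $\alpha_t$ in Definition 2 is exactly its spectral norm: $\alpha_t = \sup_{u,v\in\mathcal{B}^d} v^T M u = \|M\|_2 = \max_i |1 - \eta_t \lambda_i|$, where $\lambda_i \geq 0$ are the eigenvalues of $(S^tA)^T(S^tA)$ (i.e.\ the squared singular values of $S^tA$). Thus the statement reduces to a two-sided control of $\eta_t\lambda_i$: I will show that with the prescribed step size each $\eta_t\lambda_i$ lies in $\left[\frac{\mu}{L}\frac{(b_m-\sqrt{d}-\theta)^2}{(b_m+\sqrt{d}+\theta)^2},\, 1\right]$, so that every $1-\eta_t\lambda_i$ is nonnegative and $\max_i|1-\eta_t\lambda_i| = 1 - \eta_t\lambda_{\min}$ is bounded by the claimed expression.

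Next I would reduce the singular values of $S^tA$ to those of a standard Gaussian matrix. Writing the thin SVD $A = U\Sigma V^T$ with $U \in \mathbb{R}^{n\times d}$ having orthonormal columns and $\Sigma$ carrying the singular values of $A$, which lie in $[\sqrt{\mu},\sqrt{L}]$ by Definition 1, the rotation invariance of the i.i.d.\ Gaussian law implies that $G := S^tU$ is an $m\times d$ matrix with i.i.d.\ $N(0,1)$ entries, since its columns are jointly Gaussian, unit-variance and uncorrelated. For any $x$, setting $w = V^Tx$ gives $\|S^tAx\|_2 = \|G\Sigma w\|_2$, and sub-multiplicativity of the extreme singular values yields $\sqrt{\mu}\,\sigma_{\min}(G)\,\|x\|_2 \leq \|S^tAx\|_2 \leq \sqrt{L}\,\sigma_{\max}(G)\,\|x\|_2$. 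Hence $\mu\,\sigma_{\min}^2(G) \leq \lambda_i \leq L\,\sigma_{\max}^2(G)$ for every $i$.

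The probabilistic content then comes entirely from a non-asymptotic tail bound on the singular values of $G$. I would invoke the Davidson--Szarek / Gordon estimates in the sharp $b_m$ form of \cite{2015_Oymak_Sharp}: $\sigma_{\max}(G) \leq b_m + \sqrt{d} + \theta$ and $\sigma_{\min}(G) \geq b_m - \sqrt{d} - \theta$, each failing with probability at most $e^{-\theta^2/2}$, so both hold simultaneously with probability at least $1 - 2e^{-\theta^2/2}$, the hypothesis $b_m > \sqrt{d}$ keeping the lower bound meaningful. Combining with the sandwich gives $\mu(b_m-\sqrt{d}-\theta)^2 \leq \lambda_i \leq L(b_m+\sqrt{d}+\theta)^2$. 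Substituting $\eta_t = \frac{1}{L(b_m+\sqrt{d}+\theta)^2}$ makes the upper product exactly $1$, which also verifies $\eta_t \leq 1/\|S^tA\|_2^2$ as required by the general theorems, and sends the lower product to $\frac{\mu}{L}\frac{(b_m-\sqrt{d}-\theta)^2}{(b_m+\sqrt{d}+\theta)^2}$, delivering the stated bound on $\alpha_t$.

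The conceptual steps, namely the spectral-norm reformulation and the Gaussian reduction by rotation invariance, are routine; the only real work is the two-sided random-matrix deviation inequality, and in particular keeping both tails sharp \emph{simultaneously} so that the exact ratio $(b_m-\sqrt{d}-\theta)^2/(b_m+\sqrt{d}+\theta)^2$, rather than a looser constant, appears. I expect that to be the main obstacle, together with checking that the prescribed $\eta_t$ forces all eigenvalues of $M$ to be nonnegative, so that the absolute value on the upper side contributes nothing and does not degrade the estimate.
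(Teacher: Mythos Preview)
Your argument is correct. It differs from the paper's proof in its packaging, though the probabilistic core is the same.

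The paper does not immediately identify $\alpha_t$ as the spectral norm of the symmetric matrix $M=I-\eta_t(S^tA)^T(S^tA)$. Instead it first argues, via a separate auxiliary $\epsilon$-perturbation, that $\alpha_t>0$ with high probability, so that the supremum over $\mathcal{B}^d$ equals the supremum over $\mathbb{S}^{d-1}$ (their Lemma~2). It then expands $v^TMu$ by polarization into quadratic forms $\|u\pm v\|_2^2-\eta\|SA(u\pm v)\|_2^2$, applies the two-sided Gordon bound $\sqrt{\mu}(b_m-\sqrt{d}-\theta)\leq \|SAw\|_2/\|w\|_2\leq \sqrt{L}(b_m+\sqrt{d}+\theta)$ directly (their Lemma~3), and finishes by using $\|u+v\|_2^2\leq 4$ together with the choice of $\eta_t$ to kill the $(u-v)$ term. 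Your route---spectral norm $\Rightarrow$ extreme eigenvalues of $(S^tA)^T(S^tA)$ $\Rightarrow$ rotation-invariance reduction to $G=S^tU$ $\Rightarrow$ Davidson--Szarek---is shorter: it bypasses the positivity step and the polarization identity entirely, and it also avoids the $\epsilon\to 0$ limit. Both proofs ultimately rest on the identical two-sided Gaussian singular-value deviation and the same union bound, which is why the constants and the probability $1-2e^{-\theta^2/2}$ coincide.
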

\begin{proposition}
If the entries of the sketching matrix $S^t$ are i.i.d drawn from Normal distribution, then:
    \begin{equation}
        \rho_t \leq \frac{m}{(b_m-\mathcal{W}- \theta)^2} \left(\frac{\sqrt{2}b_m(\mathcal{W} + \theta)}{m} + |\frac{b_m^2}{m} - 1| \right),
    \end{equation}
    With probability at least $(1-e^{-\frac{\theta^2}{2}})(1-8e^{-\frac{\theta^2}{8}})$.
\end{proposition}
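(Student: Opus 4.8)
The plan is to bound the numerator and denominator of $\rho_t$ separately and then assemble the ratio, controlling each piece with a Gordon-type Gaussian comparison inequality together with Gaussian concentration. Throughout I write $T := A\mathcal{C}\cap\mathbb{S}^{n-1}$, so that $\mathcal{W}=\mathcal{W}(T)$, and I recall the sharp constant $b_m=\mathbb{E}\|S^t z\|_2$ for a unit vector $z$ \cite{2015_Oymak_Sharp}.

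First I would handle the denominator $\inf_{v\in T}\frac{1}{m}\|S^t v\|_2^2$, which is the restricted minimum singular value of the Gaussian map $S^t$ over the cone $A\mathcal{C}$. By Gordon's comparison theorem one has $\mathbb{E}\inf_{v\in T}\|S^t v\|_2\ge b_m-\mathcal{W}$, and since $S\mapsto\inf_{v\in T}\|Sv\|_2$ is $1$-Lipschitz in the Frobenius norm, Gaussian concentration gives
\[
\inf_{v\in T}\|S^{t}v\|_{2}\ \ge\ b_{m}-\mathcal{W}-\theta
\]
with probability at least $1-e^{-\theta^{2}/2}$. Squaring and dividing by $m$ produces exactly the factor $\frac{(b_m-\mathcal{W}-\theta)^2}{m}$ that appears inverted in the statement, and accounts for the first probability factor $(1-e^{-\theta^2/2})$.

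For the numerator I would split off the component of $v$ along the fixed vector $z$. Setting $v_\perp=v-\langle v,z\rangle z$, one has the exact identity
\[
v^{T}\Big(\tfrac{1}{m}S^{t^{T}}S^{t}-I\Big)z
=\langle v,z\rangle\Big(\tfrac{1}{m}\|S^{t}z\|_{2}^{2}-1\Big)
+\tfrac{1}{m}\big\langle S^{t}v_{\perp},\,S^{t}z\big\rangle .
\]
The first summand is controlled by $|\langle v,z\rangle|\le1$ together with the concentration of $\|S^{t}z\|_{2}$ about its mean $b_m$, which yields the $\big|\tfrac{b_m^2}{m}-1\big|$ term. The crux is the cross term. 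Conditioning on $w:=S^{t}z$, the family $\{S^{t}v_\perp : v_\perp\perp z\}$ is jointly Gaussian and uncorrelated with $w$, hence independent of it; consequently $v\mapsto\frac{1}{m}\langle S^{t}v_\perp,w\rangle$ is, conditionally, a Gaussian process of the form $\frac{\|w\|}{m}\langle \tilde g,v\rangle$ with $\tilde g$ a standard Gaussian supported on $z^{\perp}$. Applying the Gaussian-width bound and its concentration on $T$ (noting that projecting the Gaussian onto $z^\perp$ does not increase the width beyond $\mathcal{W}$, via Sudakov--Fernique \cite{2012_Chandrasekaran_Convex}) gives $\sup_{v\in T}\frac{1}{m}\langle S^{t}v_\perp,w\rangle\le\frac{\|w\|(\mathcal{W}+\theta)}{m}$, and bounding the random scale $\|w\|=\|S^{t}z\|_2$ through its concentration around $b_m$ delivers the term $\frac{\sqrt2\,b_m(\mathcal{W}+\theta)}{m}$. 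Combining the two summands bounds the numerator, and dividing by the denominator lower bound yields the claimed expression; a union bound over the remaining concentration events (each of the form $1-e^{-\theta^2/8}$, eight in all, handled conditionally on the Gordon event) produces the second factor $(1-8e^{-\theta^{2}/8})$.

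The hard part will be the uniform control of the bilinear cross term $\frac{1}{m}\langle S^{t}v_\perp,S^{t}z\rangle$ over the entire cone. This requires exploiting the conditional independence of $S^{t}v_\perp$ and $S^{t}z$ to collapse the term into a single Gaussian process, applying a Gordon/Gaussian-width concentration bound for its supremum, and then \emph{decoupling} the random magnitude $\|S^{t}z\|_2$ from that supremum so the two can be bounded by their typical values. Each of these moves costs a concentration event, and the delicate bookkeeping is to assemble them into the stated product-form probability while keeping the constants (the $\sqrt2$ in the cross term and the factor $8$ in the failure probability) consistent.
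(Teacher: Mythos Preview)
Your treatment of the denominator is identical to the paper's: both invoke Gordon's lemma to get $\inf_{v\in T}\|S^tv\|_2\ge b_m-\mathcal{W}-\theta$ with probability $1-e^{-\theta^2/2}$.

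For the numerator, however, the paper takes a different route. Rather than splitting $v$ into its components along and orthogonal to $z$, the paper applies the \emph{polarization identity}
\[
v^{T}\Big(\tfrac{1}{m}S^{t^{T}}S^{t}-I\Big)z
=\tfrac{1}{4}\Big(\tfrac{1}{m}\|S^{t}(v+z)\|_{2}^{2}-\|v+z\|_{2}^{2}\Big)
-\tfrac{1}{4}\Big(\tfrac{1}{m}\|S^{t}(v-z)\|_{2}^{2}-\|v-z\|_{2}^{2}\Big),
\]
reducing the bilinear form to two quadratic deviations. Each of $\|S^{t}(v\pm z)\|_{2}$ is then bounded uniformly over $v\in T$ by a Gordon-type lemma for \emph{non-unit} vectors (Lemma~6.8 of \cite{2015_Oymak_Sharp}), using that $\mathcal{W}(T\pm z)=\mathcal{W}(T)$ since translation by a fixed vector does not change the Gaussian width. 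Expanding the squares and using $\|v+z\|_{2}+\|v-z\|_{2}\le 2\sqrt{2}$ and $\|v\pm z\|_2^2\le 4$ then yields exactly the constants $\sqrt{2}$ and $|\tfrac{b_m^2}{m}-1|$ in the statement; the factor $8$ and the exponent $\theta^2/8$ in the probability arise from applying the cited lemma to the two shifted sets $T\pm z$, whose elements have norm up to $2$.

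Your orthogonal-decomposition approach is sound: the conditional independence of $S^tv_\perp$ and $S^tz$ is genuine, and the reduction of the cross term to a single Gaussian process on $z^\perp$ is correct (Sudakov--Fernique indeed controls the projected width by $\mathcal{W}$). What it buys is conceptual transparency and, in principle, slightly sharper constants in the cross term, since you avoid the $\sqrt{2}$ loss from the parallelogram inequality. What it costs is that the first summand $\langle v,z\rangle(\tfrac{1}{m}\|S^tz\|_2^2-1)$ does not sit exactly at $|\tfrac{b_m^2}{m}-1|$ but fluctuates around it by $O(b_m\theta/m)$, and similarly $\|S^tz\|_2$ in the cross term fluctuates around $b_m$; so recovering the \emph{exact} stated bound and the \emph{exact} probability $(1-8e^{-\theta^2/8})$ from your decomposition would require either absorbing those fluctuations into the other term or accepting slightly different constants. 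The paper's polarization route sidesteps this bookkeeping entirely because all the randomness is packaged into the two quadratic forms, which a single lemma handles in one shot.
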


\begin{proposition}
If the entries of the sketching matrix $S^t$ are i.i.d drawn from Normal distribution, and the sketch size $m$ satisfies $b_m > \sqrt{d}$, then:

\begin{equation}
    \sigma_t \leq \frac{(b_m + \sqrt{d} + \theta)^2}{(b_m - \sqrt{d} - \theta)^2} 
\end{equation}
with probability at least $(1-2e^{-\frac{\theta^2}{2}})$.
\end{proposition}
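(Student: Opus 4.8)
The plan is to reduce the ratio $\sigma_t$ to the conditioning of a single $m \times d$ Gaussian matrix and then invoke standard extreme-singular-value estimates together with Gaussian concentration. The key observation is that $range(A)$ is a fixed subspace of $\mathbb{R}^n$ of dimension $r \leq d$, so I would fix an orthonormal basis $U \in \mathbb{R}^{n \times r}$ for $range(A)$ and write every unit vector $v \in range(A) \cap \mathbb{S}^{n-1}$ as $v = Uw$ with $w \in \mathbb{S}^{r-1}$. Then $\|S^t v\|_2^2 = \|S^t U w\|_2^2$, and because the entries of $S^t$ are i.i.d.\ standard normal while $U$ has orthonormal columns, rotational invariance of the Gaussian ensures that $G := S^t U$ is itself an $m \times r$ matrix with i.i.d.\ standard normal entries. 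Consequently
\[
\sigma_t = \frac{\sup_{w \in \mathbb{S}^{r-1}} \|Gw\|_2^2}{\inf_{w \in \mathbb{S}^{r-1}} \|Gw\|_2^2} = \frac{\sigma_{\max}(G)^2}{\sigma_{\min}(G)^2},
\]
so the entire problem collapses to controlling the largest and smallest singular values of $G$.

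Next I would apply the Davidson--Szarek/Gordon bounds on the means of the extreme singular values of a tall Gaussian matrix, namely $E[\sigma_{\max}(G)] \leq b_m + \sqrt{r}$ and $E[\sigma_{\min}(G)] \geq b_m - \sqrt{r}$, using the sharp constant $b_m$ in place of $\sqrt{m}$ as in \cite{2015_Oymak_Sharp}. Since $r \leq d$, these give the cruder but sufficient estimates $E[\sigma_{\max}(G)] \leq b_m + \sqrt{d}$ and $E[\sigma_{\min}(G)] \geq b_m - \sqrt{d}$; the hypothesis $b_m > \sqrt{d}$ guarantees the latter is positive, so the ratio is finite and the denominator in the claimed bound is meaningful. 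I would then invoke Gaussian concentration of measure: both $G \mapsto \sigma_{\max}(G)$ and $G \mapsto \sigma_{\min}(G)$ are $1$-Lipschitz functions of the entries of $G$ with respect to the Frobenius norm, so each concentrates about its mean with sub-Gaussian tail $e^{-\theta^2/2}$. This yields $\sigma_{\max}(G) \leq b_m + \sqrt{d} + \theta$ and $\sigma_{\min}(G) \geq b_m - \sqrt{d} - \theta$, each failing with probability at most $e^{-\theta^2/2}$.

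Finally I would combine the two tail events by a union bound, so that both singular-value estimates hold simultaneously with probability at least $1 - 2e^{-\theta^2/2}$. Squaring and substituting into the expression for $\sigma_t$ gives
\[
\sigma_t \leq \frac{(b_m + \sqrt{d} + \theta)^2}{(b_m - \sqrt{d} - \theta)^2},
\]
which is exactly the claimed bound. I expect the only delicate points to be the verification that $\sigma_{\min}(G)$ is genuinely $1$-Lipschitz (via $|\sigma_{\min}(G) - \sigma_{\min}(G')| \leq \|G - G'\|_2 \leq \|G - G'\|_F$) and the legitimacy of replacing $\sqrt{m}$ with the sharp mean constant $b_m$; everything else is routine. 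I would also note that this is essentially the same machinery underlying Proposition 1, the distinction being that here $v$ is restricted to $range(A)$ directly rather than passed through the map $A$, so no $L$ or $\mu$ factors enter and one is left with the bare conditioning of $G$.
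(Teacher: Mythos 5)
Your proposal is correct and is essentially the paper's argument in different clothing: the paper applies the simplified Gordon's lemma of \cite{2015_Oymak_Sharp} directly to $\mathrm{range}(A)\cap\mathbb{S}^{n-1}$, whose Gaussian width is at most $\sqrt{d}$, to get the two-sided bounds $(b_m-\sqrt{d}-\theta)^2 \leq \|S^tv\|_2^2 \leq (b_m+\sqrt{d}+\theta)^2$ each with probability $1-e^{-\theta^2/2}$, and then union-bounds, which for a subspace is exactly your Davidson--Szarek/Gordon singular-value estimate for the rotated Gaussian matrix $G=S^tU$ plus Lipschitz concentration. The constants and failure probability come out identically, so no further comparison is needed.
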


(We include the proofs in the supplementary material.) We would like to point out that our bound on factor $\rho_t$ in proposition 2 has revealed that the outer-loop convergence of GPIS and Acc-GPIS relies on the Gaussian Width of the solution $x^\star$ and the choice of the sketch size $m$:
\begin{equation}
    \rho_t \lesssim \frac{\sqrt{2}\frac{\mathcal{W}}{\sqrt{m}}}{(1 - \frac{\mathcal{W}}{\sqrt{m}})^2}.
\end{equation}
We can then observe that the larger the sketch size $m$ is with respect to $\mathcal{W}$, the faster the outer loop convergence of GPIS and Acc-GPIS can be, but on the other hand we should not choose $m$ too large otherwise the inner-loop iteration become more costly -- this trade-off means that there is always a sweet spot for the choice of $m$ to optimize the computation.

Our theory is conservative in a sense that it does not provide guarantee for a sketch size which is below the ambient dimension $d$ since the factors $\alpha_t$ and $\sigma_t$ which are related to the inner loop prohibit this. 

    Although the Gaussian sketch provides us strong guarantees, due to computational cost of dense matrix multiplication, which is of $\mathcal{O}(mnd)$, it is not computationally attractive in practice. In the literature of randomized numerical linear algebra and matrix sketching, people usually use the random projections with fast computational structures such as the Fast Johnson-Lindenstrauss Transform \cite{2008_Ailon_Fast}\cite{2009_Ailon_Fast}, Count sketch \cite{clarkson2013low} and Count-Gauss sketch\cite{kapralov2016fake}, which cost $\mathcal{O}(nd\log(d))$, $\mathcal{O}(nnz(A))$ and $\mathcal{O}(nnz(A) + m^{1.5}d^3)$ respectively. These fast sketching methods provide significant speed up in practice compared to Gaussian sketch when $n \gg d$.

\section{Implementation for GPIS and Acc-GPIS in Practice}

In this section we describe our implementation of GPIS and Acc-GPIS algorithm in the experiments:

\begin{itemize}
    \item {\bfseries Count sketch}
 In this paper we choose the Count Sketch as our sketching method since it can be calculated in a streaming fashion and we observe that this sketching method provides the best computational speed in practice. A MATLAB implementation for efficiently applying the Count Sketch can be found in \cite{wang2015practical}.
    \item {\bfseries Line search}
    We implement the line-search scheme given by \cite{nesterov2007gradient} and is described by Algorithm 3 for GPIS and Acc-GPIS in our experiments with parameters $\gamma_u = 2$, and $\gamma_d = 2$. 
    \item {\bfseries Gradient restart for Acc-GPIS}
    We choose a efficient restarting scheme {\it gradient restart} proposed by \cite{o2015adaptive}.
    
    
\end{itemize}

\section{Numerical Experiments}

\subsection{Settings for Environments and Algorithms}

We run all the numerical experiments on a DELL laptop with 2.60 GHz Intel Core i7-5600U CPU and 1.6 GB RAM, MATLAB version R2015b.

We choose two recognized algorithms to represent the the full gradients methods and the (incremental) stochastic gradient method. For the full gradient, we choose the Accelerated projected gradient descent \cite{2009_Beck_Fast} \cite{nesterov2013introductory} with line-search method described in Algorithm 3 and gradient restart to optimize its performance. For the stochastic gradients we choose a mini-batched version of SAGA \cite{defazio2014saga} with various batch sizes ($b = 10$, $b = 50$ and $b = 100$). We use the step size suggested by SAGA's theory which is $\frac{1}{3\hat{L}}$. The code for the minibatch SAGA implementation can be found in (https://github.com/mdeff/saga). We get the estimated value for $\hat{L}$ by averaging the largest singular value of each batch (note that we do not count this into the elapsed time and epoch counts for SAGA). The sketch size of our proposed methods for each experiments are list in Table 1. We use the $l_1$ projection operator provided by the SPGL1 toolbox \cite{van2007spgl1} in the experiments.

\begin{table}[h]
\caption{Sketch sizes ($m$) for GPIS and Acc-GPIS for each experiments}
\label{sample-table}
\vskip 0.15in
\begin{center}
\begin{small}
\begin{sc}
\begin{tabular}{lcccr}
\hline
\abovespace\belowspace
 syn1 &syn2 &syn3& MAGIC04  & YEAR \\
\hline
\abovespace\belowspace
800 & 800 & 400  & 475  & 1000\\
\hline
\end{tabular}
\end{sc}
\end{small}
\end{center}
\vskip -0.1in
\end{table}



\subsection{Synthetic Data Sets}

We start with some numerical experiments on synthetic problems (Table 2) to gain some insights into the algorithms. We begin by focusing on $l_1$ norm constrained problems. We generate synthetic constrained least-square problems by first generating a random matrix sized $n$ by $d$, then perform SVD on such matrix and replace the singular values with a logarithmically decaying sequence. (The details of the procedure can be found in supplementary materials.)
Similarly we generate a synthetic problem (Syn3) for low-rank recovery with nuclear-norm constraint. This is also called the {\it multiple response regression} with a generalized form of the Least-squares:
\begin{equation}\label{Q}
    X^\star  = \arg \min_{\|X\|_\star \leq r} \|| Y  - AX |\|_F^2.
\end{equation}
\subsection{Real Data Sets}
We first run an unconstrained least-squares regression on the Year-prediction (Million-song) data set from UCI Machine Learning Repository \cite{Lichman:2013} after we normalize each column of the data matrix. We use this example to demonstrate our algorithms' performance in unconstrained problems.

Then we choose Magic04 Gamma Telescope data set from \cite{Lichman:2013} to generate a constrained Least-square regression problem. The original number of features for Magic04 are 10 , and we normalize each columns of the original data matrix and additional irrelevant random features as the same way as the experiments in \cite{langford2009sparse}\cite{shalev2011stochastic} to the data sets so that the regressor $x^\star$ can be chosen to select the sparse set of relevant features by again solving (\ref{LS}). For this case we first precalculate the $l_1$-norm of the original program's solution and then set it as the radius of our $l_1$ constraint. The details of the real data sets can be found in Table 3.

	\begin{figure}[t] 
	\centering

		\includegraphics[width=9cm]{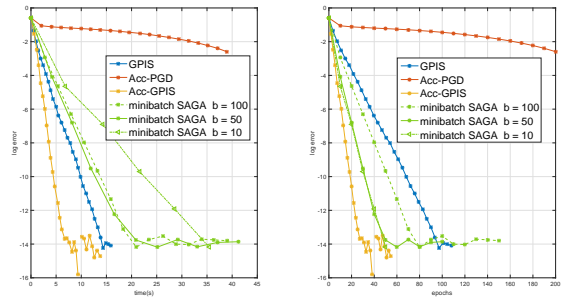}
	\caption{Experimental results on Million-song Year prediction data set (unconstrained LS regression experiment)}
	\end{figure} 

\subsection{Discussion}

We measure the performance of the algorithms by the wall-clock time (simply using the tic toc function in MATLAB) and the epoch counts. The $y$-axis of each plot is the relative error $\log(\frac{f(x) - f(x^\star)}{f(x^\star)})$. The values below $10^{-10}$ are reported as exact recovery of the least-square solution.

In all the experiments, our methods achieve the best performance in terms of wall-clock time. We show that in many cases the sketched gradient methods can outperform leading stochastic gradient methods. Both sketched gradients and stochastic gradients can achieve reduced complexity compared to the (accelerated) full gradient method, but since the sketched method has inner-loops with deterministic iterations, the line-search scheme of the classic gradient descent method can be directly used to make each iteration's step size be near optimal, and unlike the stochastic gradient, our methods do not need to access new mini-batches from memory each iteration, which can save operational time in practice.

SAGA performs competitively in terms of epoch counts (right hand figures) which is generally achieved using a small batch size of 10. Unfortunately the additional cost of the projection per iteration can severely impact on the wall clock time performance\footnote{For the unconstrained case (Million-song data set, sized $5\times10^5$ by 90), we also observe that, SAGA with $b = 10$ is unattractive in wall-clock time since it does not benefit from the vectorized operation of MATLAB as larger choices of batch size and takes too many iterations.}. The experiment on Syn1 and Syn2 are similar but in Syn2 we put the constraint on a dictionary $U$, hence in Syn2 the projection operator has an additional cost of performing such orthogonal transform. In Syn1's wall-clock time plot we can see that SAGA with $b = 10$ has the fastest convergence among all the batch size choices, but in Syn2 it becomes the worst batch size choice for SAGA since it demands more iterations and hence more calls on the projection operator. In Syn3 we have a more expensive projection operator since our constraint is on the nuclear-norm of a matrix $X \in \mathbb{R}^{100 \times 100}$, and we can observe that the real convergence speed of SAGA with $b = 10$ become much slower than any other methods in terms of wall-clock time. In this scenario the full gradient method is much more competitive. However even here as the error reduces the sketched gradient methods exhibit a computational advantage.

\begin{table}[t]\label{Syn}
\caption{Synthetic data set settings. (*) U denotes the dense dictionary which is a orthogonal transform. (**) s denotes sparsity or rank of the ground truth}
\label{sample-table}
\vskip 0.15in
\begin{center}
\begin{small}
\begin{sc}
\begin{tabular}{lcccr}
\hline
\abovespace\belowspace
Data set & Size & (**)$s$ & $\frac{L}{\mu}$ & $\Phi$ \\
\hline
\abovespace
Syn1    & (100000, 100) & 10 & $10^7$ & I\\
Syn2    & (100000, 100) & 10 & $10^7$ & (*)U\\
\belowspace
Syn3 (low rank)      & (50000, 100) & 5 & $10^4$ & - \\
\hline
\end{tabular}
\end{sc}
\end{small}
\end{center}
\vskip -0.1in
\end{table}

\begin{table}[t]
\caption{Chosen data sets for Least-square regression, RFs: number of relevant features}
\label{sample-table}
\vskip 0.15in
\begin{center}
\begin{small}
\begin{sc}
\begin{tabular}{lcccr}
\hline
\abovespace\belowspace
Data set & Size & RFs & $\Phi$\\
\hline
\abovespace
year        & (500000, 90) &  90  & - \\
\belowspace
Magic04   & (19000, 10 + 40)  & 10 & I \\
\hline
\end{tabular}
\end{sc}
\end{small}
\end{center}
\vskip -0.1in
\end{table}

	\begin{figure} 
	\centering
		\includegraphics[width=9cm]{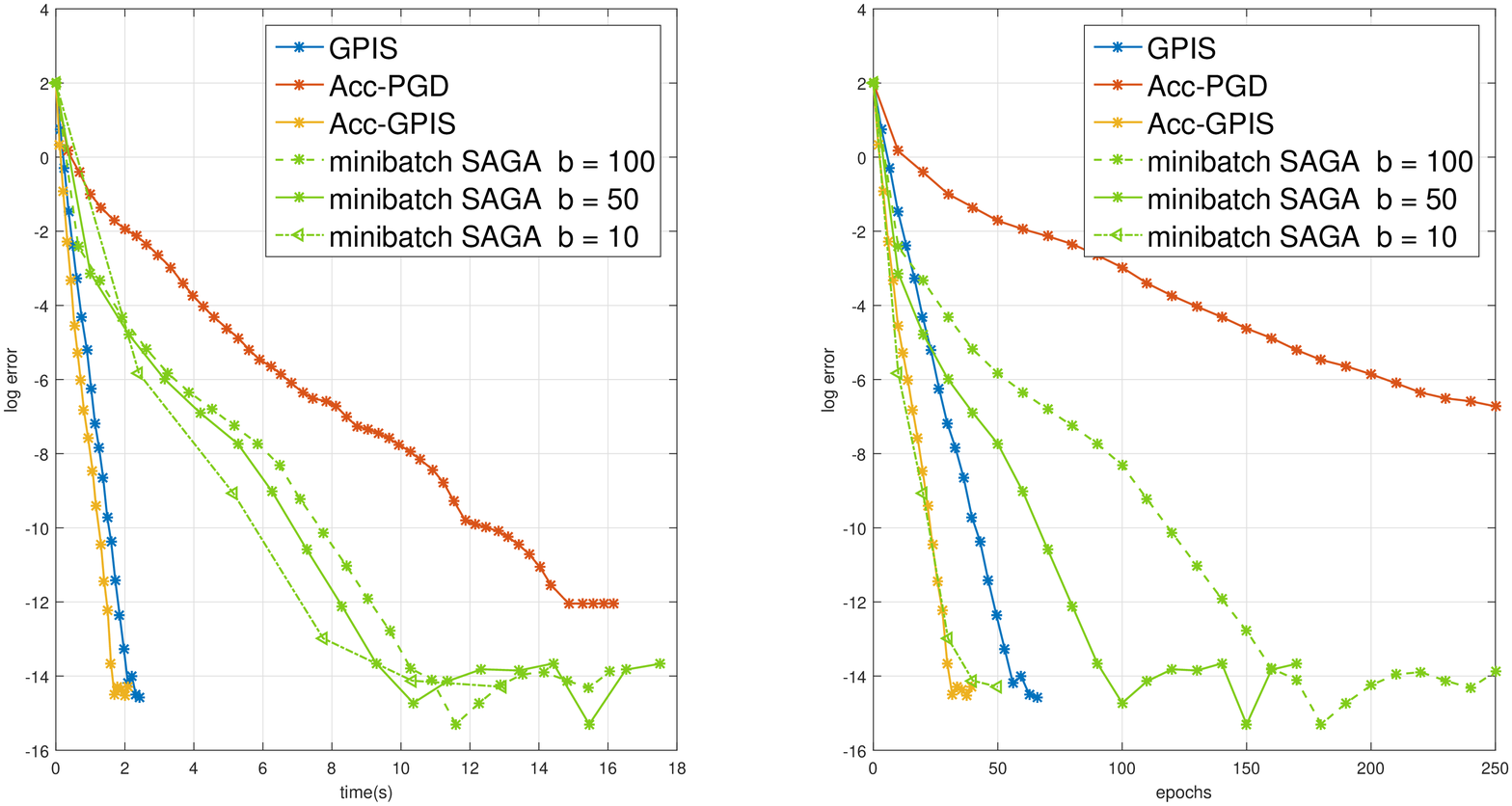}
		\includegraphics[width=9cm]{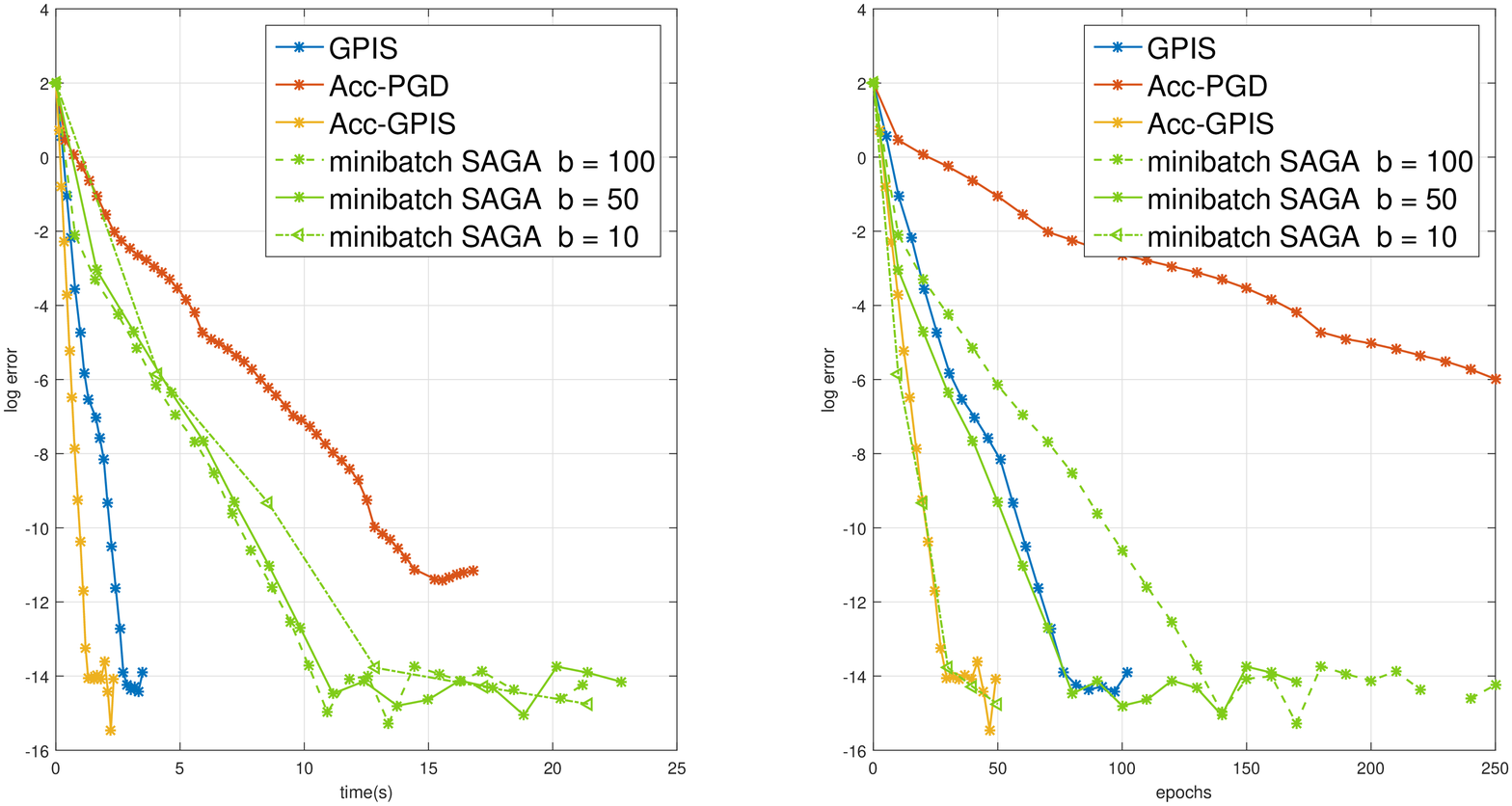}
		\includegraphics[width=9cm]{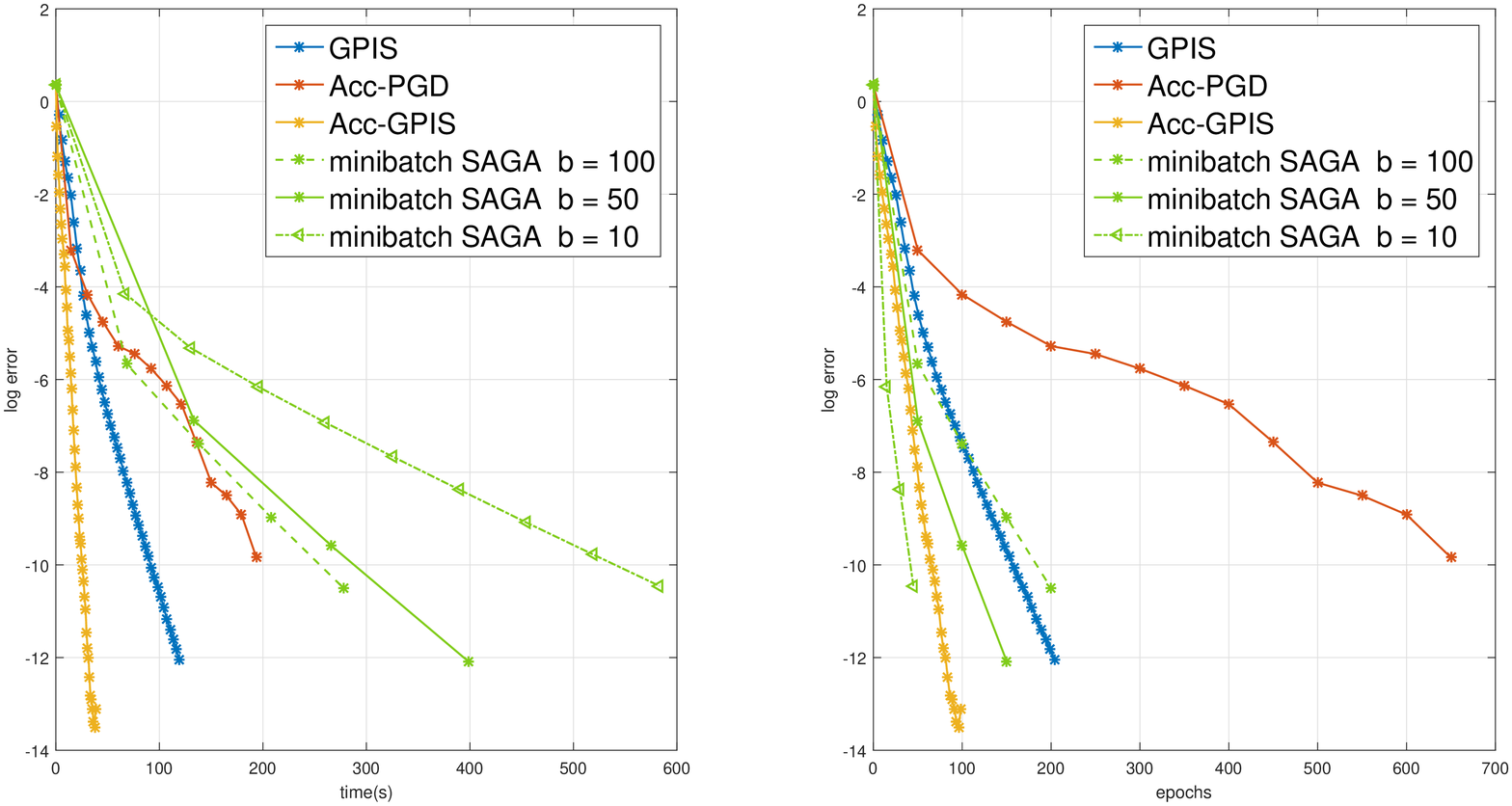}
		\includegraphics[width=9cm]{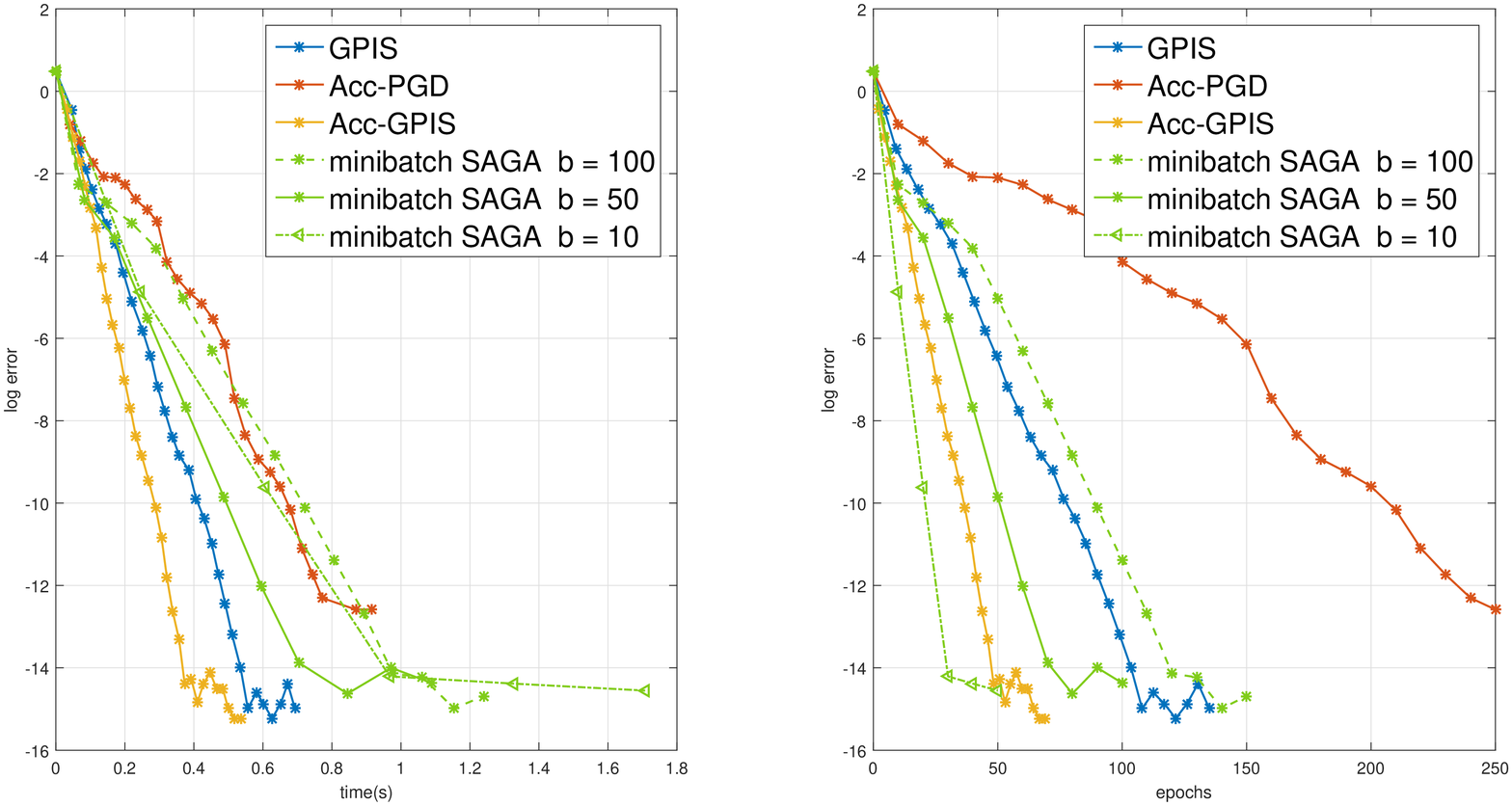}
	\caption{Experimental results on (from top to button) Syn1, Syn2, Syn3 and Magic04 data sets. The left column is for wall-clock time plots, while the right column is for epoch counts }
	\end{figure} 

\section{Conclusions}


We propose two sketched gradient algorithms GPIS and Acc-GPIS for constrained Least-square regression tasks. We provide theoretical convergence analysis of the proposed algorithms for general sketching methods and high probability concentration bounds for the Gaussian sketches. The numerical experiments demonstrates that for dense large scale overdetermined data sets our sketched gradient methods performs very well compares to the stochastic gradient method (mini-batch) SAGA and the Accelerated full gradient method in terms of wall-clock time thanks to the benefits of sketched deterministic iterations, the efficient implementation of the Count-sketch and the use of aggressive line-search methods.

\section*{Acknowledgements}
JT, MG and MD would like to acknowledge the support from H2020-MSCA-ITN Machine Sensing Training Network (MacSeNet), project 642685; EPSRC Compressed Quantitative MRI grant, number EP/M019802/1; and ERC Advanced grant, project 694888, C-SENSE, respectively. MD is also supported by a Royal Society Wolfson Research Merit Award. The authors also give thanks to the anonymous reviewers for insightful comments.

\nocite{langley00}

\newpage

\bibliography{example_paper}

\begin{thebibliography}{33}
\providecommand{\natexlab}[1]{#1}
\providecommand{\url}[1]{\texttt{#1}}
\expandafter\ifx\csname urlstyle\endcsname\relax
  \providecommand{\doi}[1]{doi: #1}\else
  \providecommand{\doi}{doi: \begingroup \urlstyle{rm}\Url}\fi

\bibitem[Ailon \& Chazelle(2009)Ailon and Chazelle]{2009_Ailon_Fast}
Ailon, N. and Chazelle, B.
\newblock The fast johnson–lindenstrauss transform and approximate nearest
  neighbors.
\newblock \emph{SIAM Journal on Computing}, 39\penalty0 (1):\penalty0 302--322,
  2009.

\bibitem[Ailon \& Liberty(2008)Ailon and Liberty]{2008_Ailon_Fast}
Ailon, N. and Liberty, E.
\newblock Fast dimension reduction using rademacher series on dual bch codes.
\newblock \emph{Discrete \& Computational Geometry}, 42\penalty0 (4):\penalty0
  615--630, 2008.

\bibitem[Allen-Zhu(2016)]{allen2016katyusha}
Allen-Zhu, Z.
\newblock Katyusha: The first direct acceleration of stochastic gradient
  methods.
\newblock \emph{arXiv preprint arXiv:1603.05953}, 2016.

\bibitem[Baraniuk et~al.(2008)Baraniuk, Davenport, DeVore, and
  Wakin]{baraniuk2008simple}
Baraniuk, R., Davenport, M., DeVore, R., and Wakin, M.
\newblock A simple proof of the restricted isometry property for random
  matrices.
\newblock \emph{Constructive Approximation}, 28\penalty0 (3):\penalty0
  253--263, 2008.

\bibitem[Beck \& Teboulle(2009)Beck and Teboulle]{2009_Beck_Fast}
Beck, A. and Teboulle, M.
\newblock A fast iterative shrinkage-thresholding algorithm for linear inverse
  problems.
\newblock \emph{SIAM Journal on Imaging Sciences}, 2\penalty0 (1):\penalty0
  183--202, 2009.

\bibitem[Candes et~al.(2006)Candes, Romberg, and Tao]{candes2006stable}
Candes, E., Romberg, J., and Tao, T.
\newblock Stable signal recovery from incomplete and inaccurate measurements.
\newblock \emph{Communications on pure and applied mathematics}, 59\penalty0
  (8):\penalty0 1207--1223, 2006.

\bibitem[Chandrasekaran \& Jordan(2013)Chandrasekaran and
  Jordan]{chandrasekaran2013computational}
Chandrasekaran, V. and Jordan, M.~I.
\newblock Computational and statistical tradeoffs via convex relaxation.
\newblock \emph{Proceedings of the National Academy of Sciences}, 110\penalty0
  (13):\penalty0 E1181--E1190, 2013.

\bibitem[Chandrasekaran et~al.(2012)Chandrasekaran, Recht, Parrilo, and
  Willsky]{2012_Chandrasekaran_Convex}
Chandrasekaran, V., Recht, B., Parrilo, P.~A., and Willsky, A.~S.
\newblock The convex geometry of linear inverse problems.
\newblock \emph{Foundations of Computational mathematics}, 12\penalty0
  (6):\penalty0 805--849, 2012.

\bibitem[Clarkson \& Woodruff(2013)Clarkson and Woodruff]{clarkson2013low}
Clarkson, K.~L. and Woodruff, D.~P.
\newblock Low rank approximation and regression in input sparsity time.
\newblock In \emph{Proceedings of the forty-fifth annual ACM symposium on
  Theory of computing}, pp.\  81--90. ACM, 2013.

\bibitem[Dasgupta \& Gupta(2003)Dasgupta and Gupta]{dasgupta2003elementary}
Dasgupta, S. and Gupta, A.
\newblock An elementary proof of a theorem of johnson and lindenstrauss.
\newblock \emph{Random Structures \& Algorithms}, 22\penalty0 (1):\penalty0
  60--65, 2003.

\bibitem[Defazio et~al.(2014)Defazio, Bach, and
  Lacoste-Julien]{defazio2014saga}
Defazio, A., Bach, F., and Lacoste-Julien, S.
\newblock Saga: A fast incremental gradient method with support for
  non-strongly convex composite objectives.
\newblock In \emph{Advances in Neural Information Processing Systems}, pp.\
  1646--1654, 2014.

\bibitem[Donoho(2006)]{2006_Donoho_Compressed}
Donoho, D.~L.
\newblock Compressed sensing.
\newblock \emph{Information Theory, IEEE Transactions on}, 52\penalty0
  (4):\penalty0 1289--1306, 2006.

\bibitem[Drineas et~al.(2011)Drineas, Mahoney, Muthukrishnan, and
  Sarl{\'o}s]{drineas2011faster}
Drineas, P., Mahoney, M.~W., Muthukrishnan, S., and Sarl{\'o}s, T.
\newblock Faster least squares approximation.
\newblock \emph{Numerische Mathematik}, 117\penalty0 (2):\penalty0 219--249,
  2011.

\bibitem[Johnson \& Zhang(2013)Johnson and Zhang]{2013_Johnson_Accelerating}
Johnson, R. and Zhang, T.
\newblock Accelerating stochastic gradient descent using predictive variance
  reduction.
\newblock In \emph{Advances in Neural Information Processing Systems 26}, pp.\
  315--323. Curran Associates, Inc., 2013.

\bibitem[Johnson \& Lindenstrauss(1984)Johnson and
  Lindenstrauss]{johnson1984extensions}
Johnson, W.~B. and Lindenstrauss, J.
\newblock Extensions of lipschitz mappings into a hilbert space.
\newblock \emph{Contemporary mathematics}, 26\penalty0 (189-206):\penalty0 1,
  1984.

\bibitem[Kapralov et~al.(2016)Kapralov, Potluru, and
  Woodruff]{kapralov2016fake}
Kapralov, M., Potluru, V.~K., and Woodruff, D.~P.
\newblock How to fake multiply by a gaussian matrix.
\newblock \emph{arXiv preprint arXiv:1606.05732}, 2016.

\bibitem[Kone{\v{c}}n{\`y} \& Richt{\'a}rik(2013)Kone{\v{c}}n{\`y} and
  Richt{\'a}rik]{2013_Konecny_Semi-stochastic}
Kone{\v{c}}n{\`y}, J. and Richt{\'a}rik, P.
\newblock Semi-stochastic gradient descent methods.
\newblock \emph{arXiv preprint arXiv:1312.1666}, 2013.

\bibitem[Kone{\v{c}}n{\`y} et~al.(2016)Kone{\v{c}}n{\`y}, Liu, Richt{\'a}rik,
  and Tak{\'a}{\v{c}}]{konevcny2016mini}
Kone{\v{c}}n{\`y}, J., Liu, J., Richt{\'a}rik, P., and Tak{\'a}{\v{c}}, M.
\newblock Mini-batch semi-stochastic gradient descent in the proximal setting.
\newblock \emph{IEEE Journal of Selected Topics in Signal Processing},
  10\penalty0 (2):\penalty0 242--255, 2016.

\bibitem[Langford et~al.(2009)Langford, Li, and Zhang]{langford2009sparse}
Langford, J., Li, L., and Zhang, T.
\newblock Sparse online learning via truncated gradient.
\newblock \emph{Journal of Machine Learning Research}, 10\penalty0
  (Mar):\penalty0 777--801, 2009.

\bibitem[Lichman(2013)]{Lichman:2013}
Lichman, M.
\newblock {UCI} machine learning repository, 2013.
\newblock URL \url{http://archive.ics.uci.edu/ml}.

\bibitem[Mahoney(2011)]{mahoney2011randomized}
Mahoney, M.~W.
\newblock Randomized algorithms for matrices and data.
\newblock \emph{Foundations and Trends{\textregistered} in Machine Learning},
  3\penalty0 (2):\penalty0 123--224, 2011.

\bibitem[Nesterov(2007)]{nesterov2007gradient}
Nesterov, Y.
\newblock Gradient methods for minimizing composite objective function.
\newblock Technical report, UCL, 2007.

\bibitem[Nesterov(2013{\natexlab{a}})]{nesterov2013gradient}
Nesterov, Y.
\newblock Gradient methods for minimizing composite functions.
\newblock \emph{Mathematical Programming}, 140\penalty0 (1):\penalty0 125--161,
  2013{\natexlab{a}}.

\bibitem[Nesterov(2013{\natexlab{b}})]{nesterov2013introductory}
Nesterov, Y.
\newblock \emph{Introductory lectures on convex optimization: A basic course},
  volume~87.
\newblock Springer Science \& Business Media, 2013{\natexlab{b}}.

\bibitem[O'Donoghue \& Candes(2015)O'Donoghue and Candes]{o2015adaptive}
O'Donoghue, B. and Candes, E.
\newblock Adaptive restart for accelerated gradient schemes.
\newblock \emph{Foundations of computational mathematics}, 15\penalty0
  (3):\penalty0 715--732, 2015.

\bibitem[Oymak et~al.(2015)Oymak, Recht, and Soltanolkotabi]{2015_Oymak_Sharp}
Oymak, S., Recht, B., and Soltanolkotabi, M.
\newblock Sharp time--data tradeoffs for linear inverse problems.
\newblock \emph{arXiv preprint arXiv:1507.04793}, 2015.

\bibitem[Pilanci \& Wainwright(2015)Pilanci and
  Wainwright]{2015_Pilanci_Randomized}
Pilanci, M. and Wainwright, M.~J.
\newblock Randomized sketches of convex programs with sharp guarantees.
\newblock \emph{Information Theory, IEEE Transactions on}, 61\penalty0
  (9):\penalty0 5096--5115, 2015.

\bibitem[Pilanci \& Wainwright(2016)Pilanci and
  Wainwright]{2016_Pilanci_Iterative}
Pilanci, M. and Wainwright, M.~J.
\newblock Iterative hessian sketch: Fast and accurate solution approximation
  for constrained least-squares.
\newblock \emph{Journal of Machine Learning Research}, 17\penalty0
  (53):\penalty0 1--38, 2016.

\bibitem[Schmidt et~al.(2013)Schmidt, Le~Roux, and Bach]{schmidt2013minimizing}
Schmidt, M., Le~Roux, N., and Bach, F.
\newblock Minimizing finite sums with the stochastic average gradient.
\newblock \emph{Mathematical Programming}, pp.\  1--30, 2013.

\bibitem[Shalev-Shwartz \& Tewari(2011)Shalev-Shwartz and
  Tewari]{shalev2011stochastic}
Shalev-Shwartz, S. and Tewari, A.
\newblock Stochastic methods for l1-regularized loss minimization.
\newblock \emph{Journal of Machine Learning Research}, 12\penalty0
  (Jun):\penalty0 1865--1892, 2011.

\bibitem[Van Den~Berg \& Friedlander(2007)Van Den~Berg and
  Friedlander]{van2007spgl1}
Van Den~Berg, E. and Friedlander, M.~P.
\newblock Spgl1: A solver for large-scale sparse reconstruction, 2007.

\bibitem[Wang(2015)]{wang2015practical}
Wang, S.
\newblock A practical guide to randomized matrix computations with matlab
  implementations.
\newblock \emph{arXiv preprint arXiv:1505.07570}, 2015.

\bibitem[Xiao \& Zhang(2014)Xiao and Zhang]{xiao2014proximal}
Xiao, L. and Zhang, T.
\newblock A proximal stochastic gradient method with progressive variance
  reduction.
\newblock \emph{SIAM Journal on Optimization}, 24\penalty0 (4):\penalty0
  2057--2075, 2014.

\end{thebibliography}
\bibliographystyle{icml2017}

\section{Supplementary materials}

\subsection{the proof for Theorem 1}

\begin{proof}

At first we denote the underlying cost function of GPIS as $f_t(x)$:

for $ t = 0 $, we have the cost function of the classical sketch (CS):
\begin{equation}
    f_t(x) := \frac{1}{2}\|Sy-SAx\|^2_2,
\end{equation}
for $ t = 1 , 2 , ... , N $ we have the the cost function of Iterative Hessian Sketch (IHS):
\begin{equation}
    f_t(x) = \frac{1}{2}\|S^{t+1}A(x-x^t)\|_2^2-m x^TA^T(y-Ax^t),
\end{equation}
and then we denote  the optimal solution of $f_t$ constrained to set $\mathcal{K}$ as $x_\star^t$ and $\|r_{i+1}^t\|_2 
    = \|x_{i+1}^t-x_\star^t\|_2$ have:
\begin{equation}
\|r_{i+1}^t\|_2 
    = \|x_{i+1}^t-x_\star^t\|_2 
  =\|\mathcal{P}_\mathcal{K} (x_i^t-\eta\nabla f(x_i))-x_\star^t\|_2
\end{equation}
then we denote cone $\mathcal{C}_t$ to be the smallest close cone at $x_\star^t$ containing the set $\mathcal{K}-x_\star^t$, again because of the distance preservation of translation by Lemma 6.3 of \cite{2015_Oymak_Sharp}, we have:
\begin{equation}
    \begin{aligned}
        \|r_{i+1}^t\|_2 &=\|\mathcal{P}_{\mathcal{K}-x_\star^t} (x_i^t-\eta\nabla f(x_i)-x_\star^t)\|_2\\
        &=  \sup_{v \in \mathcal{C}_t \cap \mathcal{B}^d}\left\{v^T(x_i-x_\star^t - \mu \nabla f(x_i))\right\},
    \end{aligned}
\end{equation}
then because of the optimality condition on the constrained LS solution $x_\star^t$, we have:
\begin{equation}
    \begin{aligned}
        \|r_{i+1}^t\|_2 &=  \sup_{v \in \mathcal{C}_t \cap \mathcal{B}^d}\left\{v^T(x_i-x_\star^t - \eta \nabla f(x_i))\right\}\\
        & \leq \sup_{v \in \mathcal{C}_t \cap \mathcal{B}^d}\left\{v^T(x_i-x_\star^t - \eta \nabla f(x_i)) +  \eta  v^T\nabla f(x_\star^t)\right\}\\
        &= \sup_{v \in \mathcal{C}_t \cap \mathcal{B}^d}\left\{v^T(x_i-x_\star^t) -  \eta  v^T(\nabla f(x_i)-\nabla f(x_\star^t))\right\}\\
        &= \sup_{v \in \mathcal{C}_t \cap \mathcal{B}^d}\left\{v^T(I -  \eta  A^TS^TSA)r_i^t\right\}\\
        &\leq \sup_{u,v \in \mathcal{C}_t \cap \mathcal{B}^d}\left\{v^T(I -  \eta A^TS^TSA)u\right\}\|r_i^t\|_2\\
        & \leq \sup_{u,v \in \mathcal{B}^d}\left\{v^T(I -  \eta A^TS^TSA)u\right\}\|r_i^t\|_2,
    \end{aligned}
\end{equation}
We denote:
\begin{equation}
    \alpha_t= \sup_{u,v \in \mathcal{B}^d}v^T(I -  \eta A^TS^TSA)u,
\end{equation}
then by recursive subsitution we have:
\begin{equation}
    \|r_{i+1}^t\|_2 \leq \alpha_t^i\|r_0^t\|_2,
\end{equation}
and suppose we run GPIHS inner loop $k_t$ time, we have:
\begin{equation}
    \|r_{k_t+1}^t\|_2 \leq \left\{\alpha_t\right\}^{k_t}\|r_0^t\|_2,
\end{equation}
and we transfer it in terms of $A$-norm:
\begin{equation}
    \|r_{k_t+1}^t\|_A \leq \left\{\alpha_t\right\}^{k_t}\sqrt{\frac{L}{\mu}}\|r_0^t\|_A.    
\end{equation}
From the main theorems of the Classical sketch \cite{2015_Pilanci_Randomized} and Iterative Hessian Sketch \cite{2016_Pilanci_Iterative} we have following relationships:
\begin{equation}
       \|x_\star^0 -x^\star\|_A \leq 2\rho_0 \|A x^\star - y\|_2 = 2\rho_0 \|e\|_2,
\end{equation}
and,
\begin{equation}
       \|x_\star^t -x^\star\|_A \leq \rho_t \|x_0^{t} -x^\star\|_A.
\end{equation}
Then by triangle inequality we have:
\begin{equation}
      \|x_0^1 - x^\star\|_A \leq \|x_0^1 - x_\star^0\|_A + 2\rho_0 \|e\|_2,
\end{equation}
and,
\begin{equation}
       \|x_0^{t+1} -x^\star\|_A \leq \|x_0^{t+1} - x_\star^{t}\|_A + \rho_t \|x_0^{t} -x^\star\|_A.
\end{equation}
Then for $t = 0$ we can have:
\begin{equation}
    \begin{aligned}
     \|x_0^1 - x^\star\|_A &\leq \|x_0^{1} - x_\star^{0}\|_A + 2\rho_0 \|e\|_2\\
     &\leq \left\{\alpha_t\right\}^{k_t}\sqrt{\frac{L}{\mu}}\|x_0^{0} - x_\star^{0}\|_A + 2\rho_0 \|e\|_2,
    \end{aligned}
\end{equation}
for $t = 1, 2, ..., N$ we have:
\begin{equation}
\begin{aligned}
     &\|x_0^t - x^\star\|_A\\
     &\leq \|x_0^{t} - x_\star^{t - 1}\|_A + \rho_t \|x_{0}^{t-1} - x^\star\|_A\\
     &\leq \left\{\alpha_t\right\}^{k_t}\sqrt{\frac{L}{\mu}}\|x_{0}^{t-1} - x_\star^{t - 1}\|_A \\
     &+ \rho_t \|x_{0}^{t-1} - x^\star\|_A\\
     &\leq \left\{\left\{\alpha_t\right\}^{k_t}\left((1 + \rho_t)\sqrt{\frac{L}{\mu}}\right) + \rho_t\right\} \|x_{0}^{t-1} - x^\star\|_A,    
\end{aligned}
\end{equation}
The last inequality holds because:
\begin{equation}
    \begin{aligned}
    \|x_{0}^{t-1} - x_{f_{N-1}}^\star\|_A &\leq \|x_{0}^{t-1} - x^\star\|_A + \|x_\star^{t - 1} - x^\star\|_A    \\
    &\leq \left\{1 + \rho_t\right\} \|x_{0}^{t-1} - x^\star\|_A,
    \end{aligned}
\end{equation}
Then we denote:
\begin{equation}
    \rho_t^\star = \left\{\alpha_t\right\}^{k_t}\left((1 + \rho_t)\sqrt{\frac{L}{\mu}}\right) + \rho_t
\end{equation}
and do recursive substitution we can have:
    \begin{equation}
       \|x_0^t - x^\star\|_A 
       \leq  \left\{\prod_{t=1}^N\rho_t^\star\right\}\|x_0^1-x^\star\|_A.
    \end{equation}
hence we finish the proof of Theorem 1.
\end{proof}

\subsection{The proofs for Theorem 2 and 3}
\begin{proof}
From the theory of the Classical sketch and Iterative Hessian Sketch we have following relationships:
\begin{equation}
       \|x_\star^0 -x^\star\|_A \leq 2\rho_0 \|A x^\star - y\|_2 = 2\rho_0 \|e\|_2,
\end{equation}
and,
\begin{equation}
       \|x_\star^t -x^\star\|_A \leq \rho_t \|x_0^{t} -x^\star\|_A.
\end{equation}
Then by triangle inequality we have:
\begin{equation}
      \|x_0^1 - x^\star\|_A \leq \|x_0^1 - x_\star^0\|_A + 2\rho_0 \|e\|_2,
\end{equation}
and,
\begin{equation}
       \|x_0^{t+1} -x^\star\|_A \leq \|x_0^{t+1} - x_\star^{t}\|_A + \rho_t \|x_0^{t} -x^\star\|_A.
\end{equation}

The remaining task of this proof is just bound the term $\|x_0^{t+1} - x_\star^{t}\|_A$ for both GPIS and Acc-GPIS algorithm and then chain it. For all the sketched objective function $f_t(x)$ , $t = 0,1,...,N$, and any pair of vectors $x, x' \in \mathcal{K}$ we have:
\begin{equation}
       f_t(x) - f_t(x') - \langle \triangledown f_t(x') , x - x' \rangle = \|S^t A (x - x')\|_2^2
\end{equation}
If we set $x' = x_\star^t$, by first order optimality condition we immediately have:
\begin{equation}
\begin{aligned}
          &&f_t(x) - f_t(x_\star^t) 
         \geq \|S^t A (x- x_\star^t)\|_2^2 \\
         &=& \|S^t \frac{A (x- x_\star^t)}{\|A (x- x_\star^t)\|_2}\|A (x- x_\star^t)\|_2\|_2^2 \\
          &\geq& \left\{\inf_{v \in range(A) \cap \mathbb{S}^{n-1}} \|S^t v\|_2^2\right\} \|x- x_\star^t\|_A^2,
\end{aligned}
\end{equation}
so we have:
\begin{equation}
       \|x- x_\star^t\|_A \leq \frac{\sqrt{f_t(x) - f_t(x_\star^t)}}{\inf_{v \in range(A) \cap \mathbb{S}^{n-1}} \|S^t v\|_2},
\end{equation}

From the convergence theory in \cite{2009_Beck_Fast} which the authors in their Remark 2.1 have stated to hold for convex constrained sets, for GPIS inner iterates we have:
\begin{equation}
       f_t(x_k) - f_t(x_\star^t) \leq \frac{\beta LR\sup_{v \in range(A) \cap \mathbb{S}^{n-1}} \|S^t v\|_2^2 }{2k},
\end{equation}
and for Acc-GPIS inner loop we have:
\begin{equation}
       f_t(x_k) - f_t(x_\star^t) \leq \frac{2\beta LR\sup_{v \in range(A) \cap \mathbb{S}^{n-1}} \|S^t v\|_2^2 }{(k + 1)^2},
\end{equation}
hence for GPIS:
\begin{equation}
       \|x_0^{t+1} -x_\star^t\|_A \leq \sqrt{\frac{\beta L\sigma_t R}{2k}},
\end{equation}
for Acc-GPIS,
\begin{equation}
       \|x_0^{t+1} -x_\star^t\|_A \leq \sqrt{\frac{2\beta L\sigma_t R}{(k + 1)^2}},
\end{equation}
Then by simply towering the inequalities we shall obtain the desired results in Theorem 2 and 3.
\end{proof}

\subsection{The proofs for quantitative bounds of $\alpha_t$, $\rho_t$ and $\sigma_t$ for Gaussian sketches}

To prove the results in Proposition 1, 2 and 3 we need the following concentration lemmas as pillars:
\begin{lemma}\label{L1}
For any $g \in \mathcal{R}^d$, we have:
\begin{equation}
    \sup_{v \in \mathcal{C} \cap \mathcal{B}^{d}} v^Tg = \max\left\{ 0, 
            \sup_{u \in \mathcal{C} \cap  \mathbb{S}^{d-1}} u^Tg 
            \right\}
\end{equation}
\end{lemma}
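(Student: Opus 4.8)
The plan is to exploit the cone structure of $\mathcal{C}$ together with the positive homogeneity of the map $v \mapsto v^T g$, reducing the ball-constrained supremum on the left to the sphere-constrained supremum on the right by radial scaling. First I would record the trivial lower bound: since $\mathcal{C}$ is a closed cone at $x^\star$, Definition 2 gives $0 \in \mathcal{C}$ (take $c = 0$), hence $0 \in \mathcal{C} \cap \mathcal{B}^d$ and therefore $\sup_{v \in \mathcal{C} \cap \mathcal{B}^d} v^T g \geq 0^T g = 0$. This is exactly what guarantees that the left-hand side can never fall below the $0$ appearing inside the maximum on the right.

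Next I would set up the radial decomposition of an arbitrary admissible $v$. For any nonzero $v \in \mathcal{C} \cap \mathcal{B}^d$, write $v = \|v\|_2\, u$ with $u := v/\|v\|_2$. Because $\mathcal{C}$ is a cone it is closed under multiplication by the positive scalar $1/\|v\|_2$, so $u \in \mathcal{C}$; clearly $u \in \mathbb{S}^{d-1}$, and $\|v\|_2 \in (0,1]$ since $v \in \mathcal{B}^d$. Thus every nonzero admissible $v$ satisfies $v^T g = \|v\|_2\, u^T g$ with $u \in \mathcal{C} \cap \mathbb{S}^{d-1}$ and $\|v\|_2 \in (0,1]$, which is the bridge between the two feasible sets.

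I would then conclude by a case split on the sign of $M := \sup_{u \in \mathcal{C} \cap \mathbb{S}^{d-1}} u^T g$. If $M > 0$, then maximising $\|v\|_2\, u^T g$ over admissible $v$ drives $\|v\|_2$ to its largest value $1$, so the left-hand supremum equals $M$; since every unit vector in $\mathcal{C}$ is admissible in the ball the reverse inequality is immediate, and because $M > 0$ we have $M = \max\{0, M\}$, giving the claim. If instead $M \leq 0$, then $v^T g = \|v\|_2\, u^T g \leq 0$ for every nonzero admissible $v$, while $v = 0$ attains the value $0$, so the left-hand supremum equals $0 = \max\{0, M\}$.

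This lemma is elementary, so I do not anticipate a genuine obstacle; the only points deserving care are verifying that the origin lies in $\mathcal{C} \cap \mathcal{B}^d$ (which depends on $\mathcal{C}$ being a cone, as in Definition 2, and keeps the left-hand supremum from being pushed negative) and correctly pairing the two sign regimes of $M$ with the optimal radius, namely $\|v\|_2 = 1$ when $M > 0$ and $\|v\|_2 = 0$ when $M \leq 0$.
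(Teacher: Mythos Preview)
Your proof is correct and follows essentially the same approach as the paper: both use that $0\in\mathcal{C}\cap\mathcal{B}^d$ to get the nonnegativity of the left-hand side, and both reduce the ball supremum to the sphere supremum via the radial scaling $v=\|v\|_2\,u$ with $u\in\mathcal{C}\cap\mathbb{S}^{d-1}$. Your treatment is in fact slightly more careful than the paper's, since you make the case $M\le 0$ explicit, whereas the paper only spells out the strictly positive case.
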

\begin{proof}
By the definition of cone projection operator we have:
\begin{equation}
\sup_{v \in \mathcal{C} \cap \mathcal{B}^{d}} v^Tg = \|\mathcal{P_C}(g)\|_2 \geq 0
\end{equation}
if $\sup_{v \in \mathcal{C} \cap \mathcal{B}^{d}} v^Tg > 0$:
\begin{equation}
\sup_{v \in \mathcal{C} \cap \mathcal{B}^{d}} v^Tg = \sup_{v \in \mathcal{C} \cap \mathcal{B}^{d}} \|v\|_2\frac{v^Tg}{\|v\|_2} \leq \sup_{u \in \mathcal{C} \cap  \mathbb{S}^{d-1}} u^Tg,
\end{equation}
and meanwhile since $\mathcal{C} \cap  \mathbb{S}^{d-1} \in \mathcal{C} \cap \mathcal{B}^{d}$ we have:
\begin{equation}
    \sup_{v \in \mathcal{C} \cap \mathcal{B}^{d}} v^Tg \geq \sup_{u \in \mathcal{C} \cap  \mathbb{S}^{d-1}} u^Tg,
\end{equation}
hence we have:
\begin{equation}
    \sup_{v \in \mathcal{C} \cap \mathcal{B}^{d}} v^Tg = \sup_{u \in \mathcal{C} \cap  \mathbb{S}^{d-1}} u^Tg,
\end{equation}
\end{proof}

\begin{lemma}\label{L2}
If $\sup_{u, v \in \mathcal{C} \cap \mathcal{B}^{d}} v^TMu > 0$, we have:
\begin{equation}
    \sup_{u, v \in \mathcal{C} \cap \mathcal{B}^{d}} v^TMu = \sup_{u, v \in \mathcal{C} \cap  \mathbb{S}^{d-1}} v^TMu 
\end{equation}
\end{lemma}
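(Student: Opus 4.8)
The plan is to mirror the proof of Lemma~\ref{L1}, exploiting that $\mathcal{C}$ is a cone: any nonzero element of $\mathcal{C}$ can be rescaled onto the unit sphere while remaining inside $\mathcal{C}$, and under such rescaling the bilinear form $v^T M u$ scales as the product $\|u\|_2\|v\|_2$. The positivity hypothesis is exactly what makes this rescaling argument go through, so I would keep track of where it is used.

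First I would dispose of the easy inclusion. Since $\mathcal{C}\cap\mathbb{S}^{d-1} \subseteq \mathcal{C}\cap\mathcal{B}^{d}$, the supremum over the larger set dominates, giving immediately
\[
\sup_{u,v \in \mathcal{C}\cap\mathcal{B}^{d}} v^T M u \;\geq\; \sup_{u,v \in \mathcal{C}\cap\mathbb{S}^{d-1}} v^T M u .
\]

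For the reverse inequality I would invoke the hypothesis $\sup_{u,v \in \mathcal{C}\cap\mathcal{B}^{d}} v^T M u > 0$ to restrict attention to pairs $(u,v)\in(\mathcal{C}\cap\mathcal{B}^{d})^2$ with $v^T M u > 0$, which are the only pairs relevant to a positive supremum. For any such pair both $u$ and $v$ are nonzero, so setting $\hat u = u/\|u\|_2$ and $\hat v = v/\|v\|_2$ the cone property guarantees $\hat u,\hat v \in \mathcal{C}\cap\mathbb{S}^{d-1}$, and $v^T M u = \|u\|_2\|v\|_2\,\hat v^T M \hat u$. Here $\hat v^T M \hat u > 0$ while $\|u\|_2\|v\|_2 \leq 1$, so $v^T M u \leq \hat v^T M \hat u \leq \sup_{u',v'\in\mathcal{C}\cap\mathbb{S}^{d-1}} v'^T M u'$. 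Taking the supremum over all admissible pairs yields the reverse inequality, and combining the two directions gives the claimed equality.

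The only delicate point, and the reason the positivity hypothesis cannot be dropped, is the step $\|u\|_2\|v\|_2\,\hat v^T M \hat u \leq \hat v^T M \hat u$: shrinking the norms only decreases the value when $\hat v^T M \hat u$ is nonnegative, which is precisely what restricting to $v^T M u > 0$ secures. If the ball supremum were zero or negative it could be attained at $u=0$ or $v=0$ while the spherical supremum is strictly negative, and the equality would fail; the hypothesis rules this out by forcing the optimum to be approached in the positive regime where the cone rescaling applies.
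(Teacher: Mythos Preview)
Your proof is correct and follows essentially the same route as the paper's: both directions use the inclusion $\mathcal{C}\cap\mathbb{S}^{d-1}\subseteq\mathcal{C}\cap\mathcal{B}^{d}$ for the easy inequality and the cone rescaling $v^TMu=\|u\|_2\|v\|_2\,\hat v^TM\hat u$ together with $\|u\|_2\|v\|_2\le 1$ for the reverse. Your write-up is in fact more careful than the paper's, which leaves implicit exactly the point you highlight---that the positivity hypothesis is what legitimises the step $\|u\|_2\|v\|_2\,\hat v^TM\hat u\le \hat v^TM\hat u$.
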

\begin{proof}
Since $u, v \in \mathcal{C} \cap \mathcal{B}^{d}$, $\|u\|_2$ and $\|v\|_2$ are both less than or equal to 1, we can have the following upper bound:
\begin{eqnarray*}
    \sup_{u, v \in \mathcal{C} \cap \mathcal{B}^{d}} v^TMu &=& \sup_{u, v \in \mathcal{C} \cap \mathcal{B}^{d}} (\frac{v^TMu}{\|v\|_2\|u\|_2})\|v\|_2\|u\|_2 \\
    &\leq&  \sup_{u, v \in \mathcal{C} \cap  \mathbb{S}^{d-1}} v^TMu, 
\end{eqnarray*}
and meanwhile since $\mathcal{C} \cap  \mathbb{S}^{d-1} \in \mathcal{C} \cap \mathcal{B}^{d}$ we have:
\begin{equation}
    \sup_{u, v \in \mathcal{C} \cap \mathcal{B}^{d}} v^TMu \geq \sup_{u, v \in \mathcal{C} \cap  \mathbb{S}^{d-1}} v^TMu,
\end{equation}
hence we have:
\begin{equation}
    \sup_{u, v \in \mathcal{C} \cap \mathcal{B}^{d}} v^TMu = \sup_{u, v \in \mathcal{C} \cap  \mathbb{S}^{d-1}} v^TMu 
\end{equation}
\end{proof}

\begin{lemma}\label{L3}
If the entries of the sketching matrix $S$ is i.i.d drawn from Normal distribution and $v \in \mathcal{C}$, we have:
\begin{equation}
    \|SAv\|_2 \geq \sqrt{\mu}(b_m - \mathcal{W} - \theta) \|v\|_2,
\end{equation}
\begin{equation}
    \|SAv\|_2 \leq \sqrt{L}(b_m + \mathcal{W} + \theta) \|v\|_2,
\end{equation}
with probability at least $1-e^{-\frac{\theta^2}{2}}$. ($b_m=\sqrt{2}\frac{\Gamma(\frac{m+1}{2})}{\Gamma(\frac{m}{2})}\approx\sqrt{m}$, $\mathcal{W} := \mathcal{W} (A\mathcal{C}\cap  \mathbb{S}^{n-1})$)
\end{lemma}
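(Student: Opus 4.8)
The plan is to reduce the two-sided control of $\|SAv\|_2$ to a uniform concentration statement for the Gaussian map $S$ restricted to the transformed constraint cone, and then to reattach the singular-value bounds from Definition 1. First I would dispose of the trivial case $v = 0$, and otherwise set $w = Av$ so that $\bar w := w/\|w\|_2$ is a unit vector lying in the set $T := A\mathcal{C} \cap \mathbb{S}^{n-1}$, whose Gaussian width is exactly $\mathcal{W} = \mathcal{W}(A\mathcal{C}\cap\mathbb{S}^{n-1})$. Factoring $\|SAv\|_2 = \|w\|_2\,\|S\bar w\|_2$ and invoking Definition 1 in the form $\sqrt{\mu}\|v\|_2 \leq \|w\|_2 = \|Av\|_2 \leq \sqrt{L}\|v\|_2$, the problem separates into controlling $\|S\bar w\|_2$ uniformly over $\bar w \in T$, and then carrying the factor $\sqrt{\mu}$ into the lower bound and $\sqrt{L}$ into the upper bound.

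The core of the argument is Gordon's Gaussian min--max (``escape through the mesh'') comparison, which bounds the expected extreme behaviour of $S$ over a subset of the sphere by its Gaussian width. Concretely I would invoke the estimates
\begin{equation}
E\left[\inf_{u \in T}\|Su\|_2\right] \geq b_m - \mathcal{W}, \qquad E\left[\sup_{u \in T}\|Su\|_2\right] \leq b_m + \mathcal{W},
\end{equation}
where $b_m \approx \sqrt{m}$ is the mean length of an $m$-dimensional standard Gaussian vector; these are the standard consequences of Gordon's theorem used in \cite{2015_Oymak_Sharp}\cite{2012_Chandrasekaran_Convex}. This is the step I expect to be the main obstacle, since it is where the geometry of the constraint cone enters through $\mathcal{W}(T)$; everything else is comparatively routine.

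To pass from these expectations to a high-probability statement I would use Gaussian Lipschitz concentration. Both $S \mapsto \inf_{u\in T}\|Su\|_2$ and $S \mapsto \sup_{u\in T}\|Su\|_2$ are $1$-Lipschitz in the Frobenius norm, since $\big|\,\|S_1 u\|_2 - \|S_2 u\|_2\,\big| \leq \|S_1 - S_2\|_F$ for unit $u$ and Lipschitz continuity is preserved under taking infima/suprema. The standard Gaussian concentration inequality then gives a deviation of at most $\theta$ below the mean for the infimum and at most $\theta$ above the mean for the supremum, each failing with probability at most $e^{-\theta^2/2}$. Combining with the Gordon bounds yields
\begin{equation}
b_m - \mathcal{W} - \theta \leq \|S\bar w\|_2 \leq b_m + \mathcal{W} + \theta
\end{equation}
on the respective events.

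Finally I would reassemble the two factors. On the lower-tail event, using $\|w\|_2 \geq \sqrt{\mu}\|v\|_2$ together with $\|S\bar w\|_2 \geq 0$ (so the bound is non-vacuous only when $b_m - \mathcal{W} - \theta > 0$, and holds trivially otherwise as the left side is nonnegative), I obtain $\|SAv\|_2 \geq \sqrt{\mu}(b_m - \mathcal{W} - \theta)\|v\|_2$; on the upper-tail event, using $\|w\|_2 \leq \sqrt{L}\|v\|_2$, I obtain $\|SAv\|_2 \leq \sqrt{L}(b_m + \mathcal{W} + \theta)\|v\|_2$. Each inequality therefore holds with probability at least $1 - e^{-\theta^2/2}$, as claimed; when both are needed simultaneously, as in Propositions 1 and 3, a union bound over the two tail events accounts for the factor $2$ that appears there.
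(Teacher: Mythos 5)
Your proposal is correct and follows essentially the same route as the paper: the paper's proof simply cites the simplified form of Gordon's lemma (Lemma 6.7 of \cite{2015_Oymak_Sharp}) to get $(b_m - \mathcal{W} - \theta)\|Av\|_2 \leq \|SAv\|_2 \leq (b_m + \mathcal{W} + \theta)\|Av\|_2$ and then applies the singular-value bounds $\sqrt{\mu}\|v\|_2 \leq \|Av\|_2 \leq \sqrt{L}\|v\|_2$ from Definition 1, exactly as you do. The only difference is that you unpack the cited lemma by re-deriving it from Gordon's comparison plus Gaussian Lipschitz concentration, which the paper treats as a black box.
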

\begin{proof}
This Lemma follows the result of the simplified form of the Gordon's Lemma [Lemma 6.7]\cite{2015_Oymak_Sharp}:
\begin{eqnarray*}
    \|SAv\|_2 &\geq& (b_m - \mathcal{W} (A\mathcal{C}\cap  \mathbb{S}^{n-1})- \theta)\|Av\|_2\\
    &\geq& \sqrt{\mu}(b_m - \mathcal{W} (A\mathcal{C}\cap  \mathbb{S}^{n-1})- \theta)\|v\|_2
\end{eqnarray*}
\begin{eqnarray*}
    \|SAv\|_2 &\leq& (b_m + \mathcal{W} (A\mathcal{C}\cap  \mathbb{S}^{n-1}) + \theta)\|Av\|_2\\
    &\leq& \sqrt{L}(b_m + \mathcal{W} (A\mathcal{C}\cap  \mathbb{S}^{n-1}) + \theta)\|v\|_2
\end{eqnarray*}
\end{proof}

\subsubsection{the proof for Proposition 1}

\begin{proof}
Let's mark out the feasible region of the step-size $\eta$:
\begin{eqnarray*}
&&\alpha(\eta,S^tA) \\
&=& \sup_{u,v \in \mathcal{B}^{d}} v^T(I-\eta A^TS^{T}SA)u \\
&\geq& \sup_{v \in \mathcal{B}^{d}} v^T(I-\eta A^TS^{T}SA)v \\
&=& \sup_{v \in \mathcal{B}^{d}} (\|v\|_2^2 - \eta \|SAv\|_2^2)\\
&\geq& \sup_{v \in \mathcal{B}^{d}} ((1 - \eta L(b_m + \sqrt{d} + \theta - \epsilon)^2)\|v\|_2^2),
\end{eqnarray*}
so if we choose a step size $\eta \leq \frac{1}{L(b_m + \sqrt{d} + \theta)^2}$  we can ensure that with probability $1-e^{-\frac{(\theta - \epsilon)^2}{2}}$($\epsilon > 0$) we have $\alpha(\eta,S^tA) > 0$ and the Lemma 2 become applicable:
\begin{eqnarray*}
&&\alpha(\eta,S^tA) \\
&=& \sup_{u,v \in \mathcal{B}^{d}} v^T(I-\eta A^TS^{T}SA)u \\
&=& \sup_{u,v \in  \mathbb{S}^{d-1}} v^T(I-\eta A^TS^{T}SA)u \\
&=& \sup_{u,v \in    \mathbb{S}^{d-1}}\frac{1}{4}[(u+v)^T(I-\eta A^TS^{T}SA)(u+v) \\
&&- (u-v)^T(I-\eta A^TS^{T}SA)(u-v)]\\
&=& \sup_{u,v \in    \mathbb{S}^{d-1}}\frac{1}{4}[\|u+v\|_2^2
-\eta\|SA(u+v)\|_2^2 \\
&&- \|u-v\|_2^2 + \eta \|SA(u-v)\|_2^2]\\
&\leq& \sup_{u,v \in    \mathbb{S}^{d-1}}\frac{1}{4}[(1
-\eta \mu(b_m - \sqrt{d} - \theta)^2)\|u+v\|_2^2\\
&&+ (\eta L(b_m + \sqrt{d} + \theta)^2 - 1)\|u-v\|_2^2]
\end{eqnarray*}
The last line of inquality holds with probability at least $1-2e^{-\frac{\theta^2}{2}}$ according to Lemma 3.
Then since we have set $\eta \leq \frac{1}{L(b_m + \sqrt{d}+ \theta + \epsilon)^2}$, and meanwhile notice the fact that $\|u + v\|_2^2 \leq 4$ we have:
\begin{eqnarray*}
&&\alpha(\eta,S^tA) \\
&\leq&  \sup_{u,v \in    \mathbb{S}^{d-1}}\frac{1}{4}(1
-\eta \mu(b_m - \sqrt{d} - \theta)^2\|u+v\|_2^2\\
&\leq& (1
-\eta \mu(b_m - \sqrt{d} - \theta)^2)
\end{eqnarray*}
If we chose $\eta = \frac{1}{L(b_m + \sqrt{d} + \theta)^2}$ we have:
\begin{equation}
    \alpha(\eta,S^tA) \leq \left(1-\frac{\mu}{L}\frac{(b_m-\sqrt{d}- \theta)^2}{(b_m+\sqrt{d}+ \theta )^2}\right),
\end{equation}
Then let $\epsilon \to 0$, we shall get the result shown in Proposition 1.
\end{proof}

\subsubsection{The proof for Proposition 2}
\begin{proof}
Recall that $\rho_t$ is defined as:
\begin{equation}
    \rho(S^t,A)=\frac{\sup_{v \in A\mathcal{C} \cap  \mathbb{S}^{n-1}} v^T ( \frac{1}{m}S^{t^T}S^t-I)z}{\inf_{v \in A\mathcal{C} \cap \mathbb{S}^{n-1}} \frac{1}{m}\|S^tv\|_2^2},
\end{equation}
we start by lower-bounding the denominator, by simplified Gordon's lemma [Lemma 6.7]\cite{2015_Oymak_Sharp} we directly have:
\begin{equation}
    \inf_{v \in A\mathcal{C} \cap \mathbb{S}^{n-1}} \frac{1}{m}\|Sv\|_2^2 \geq \frac{(b_m - \mathcal{W} - \theta)^2}{m},
\end{equation}
    with probability at least $(1-e^{-\frac{\theta^2}{2}})$.Then we move to the upper bound for the numerator:
\begin{equation}
    \begin{aligned}
    &v^T\left(\frac{S^{t^T}S^t}{m} - I\right)z \\
    &= \frac{1}{4}\{ (v + z)^T(\frac{S^{t^T}S^t}{m} - I)(v + z) \\
    &- (v - z)^T(\frac{S^{t^T}S^t}{m} - I)(v - z) \} \\
    &= \frac{1}{4}\{ \frac{1}{m}\|S^t(v + z)\|_2 -\|v + z\|_2\\
    & + \|v - z\|_2 - \frac{1}{m}\|S^t(v - z)\|_2 \},
    \end{aligned}
\end{equation}
and,
\begin{equation}
    \begin{aligned}
    \mathcal{W}(A\mathcal{C}\cap \mathbb{S}^{n-1} - z) &= \mathbb{E}_g(\sup_{v \in A\mathcal{C}\cap\mathbb{S}^{n-1}} g^T(v - z))\\
    &= \mathbb{E}_g(g^Tz + \sup_{v \in A\mathcal{C}\cap\mathbb{S}^{n-1}} v^Tg)\\
    & = \mathcal{W}(A\mathcal{C}\cap \mathbb{S}^{n-1})
    \end{aligned}
\end{equation}
hence we have the following by [Lemma 6.8]\cite{2015_Oymak_Sharp}:
\begin{equation}
    \begin{aligned}
    &v^T\left(\frac{S^{t^T}S^t}{m} - I\right)z \\
    &\leq \frac{1}{4}\left\{ \frac{1}{m}(b_m\|v + z\|_2 + \mathcal{W} + \theta)^2 - \|v + z\|_2^2\right\}\\
    & + \frac{1}{4}\left\{ \frac{1}{m}(b_m\|v - z\|_2 + \mathcal{W} + \theta)^2 - \|v - z\|_2^2\right\}\\
    & = \frac{1}{4}\left\{ (\frac{b_m^2}{m} - 1)\|v + z\|_2^2 + \frac{2b_m(\mathcal{W} + \theta)}{m} \|v + z\|_2 \right\}\\
    &+ \frac{1}{4}\left\{ (1 - \frac{b_m^2}{m})\|v - z\|_2^2 + \frac{2b_m(\mathcal{W} + \theta)}{m} \|v - z\|_2 \right\},
    \end{aligned}
\end{equation}
    with probability at least $(1-8e^{-\frac{\theta^2}{8}})$. Note that $\|v + z\|_2 + \|v - z\|_2 \leq 2\sqrt{2}$ and $\|v + z\|_2^2 + \|v - z\|_2^2 \leq 4$, we have:
\begin{equation}
    \begin{aligned}
    &v^T\left(\frac{S^{t^T}S^t}{m} - I\right)z \\
    &\leq \frac{2b_m(\mathcal{W} + \theta)}{m}\frac{\|v + z\|_2 + \|v - z\|_2}{4} + |\frac{b_m^2}{m} - 1|\\
    &\leq \frac{\sqrt{2}b_m(\mathcal{W} + \theta)}{m} + |\frac{b_m^2}{m} - 1|    
    \end{aligned}
\end{equation}
thus finishes the proof.
\end{proof}

\subsubsection{The proof for Proposition 3}
\begin{proof}
Recall that $\sigma_t$ is defined as:
\begin{equation}
\sigma(S^t, A)= \frac{\sup_{v \in range(A) \cap \mathbb{S}^{n-1}} \|S^t v\|_2^2}{\inf_{v \in  range(A) \cap \mathbb{S}^{n-1}} \|S^t v\|_2^2},
\end{equation}
by simply apply again the Gordon's lemma [Lemma 6.7]\cite{2015_Oymak_Sharp}, with $\mathcal{W}(A\mathbb{S}^{d-1}) \leq \sqrt{d}$, we with obtain the upper bound on the numerator:
\begin{equation}
    \sup_{v \in range(A) \cap \mathbb{S}^{n-1}} \|S^t v\|_2^2 \leq (b_m + \sqrt{d} + \theta)^2,
\end{equation}
and the lower bound:
\begin{equation}
    \inf_{v \in range(A) \cap \mathbb{S}^{n-1}} \|S^t v\|_2^2 \geq (b_m - \sqrt{d} - \theta)^2,
\end{equation}
both with probability at least $ 1 - e^{-\frac{\theta^2}{2}}$.
\end{proof}

\subsection{Details of the implementation of algorithms and numerical experiments}

For our GPIS and Acc-GPIS algorithms, we have several key points of implemenations:

\begin{itemize}
    \item {\bfseries Count sketch}
    
    As described in the main text.
    
    \item {\bfseries Line search}
    
    We implement the line-search scheme given by \cite{nesterov2007gradient} and is described by Algorithm 3 for GPIS and Acc-GPIS in our experiments with parameters $\gamma_u = 2$, and $\gamma_d = 2$. Such choice of line-search parameters simply means: when even we find the condition $f_t(\mathcal{P_K}(x_i - \eta \triangledown f_t(x_i))) \leq m_L$ does not hold, we shrink the step size by a factor of 2; and then at the beginning of each iteration, we increase the step size chosen at previous iteration by a factor of 2, then do backtracking again. Hence our methods are able to ensure we use an aggressive step size safely in each iteration. This is an important advantage of the sketched gradient method since we observe that for stochastic gradient such as SAGA a heuristic backtracking method similar to Algorithm 3 may work but it will demand a very small $\gamma_d$ (tends to 1) otherwise SAGA may go unstable, and an aggressive choice like our $\gamma_d = 2$ is unacceptable for SAGA. (Hence we suspect that SAGA is unlikely to be able to benefit computational gains from line-search as our method does.)
    
    \item {\bfseries Gradient restart for Acc-GPIS}
    
    \cite{o2015adaptive} has proposed two heuristic adaptive restart schemes - {\it gradient restart} and {\it function restart} for the accelerated gradient methods and have shown significant improvements without the need of the knowledge of the functional parameters $\mu$ and $L$. Such restart methods are directly applicable for the Acc-GPIS by nature due to its sketched deterministic iterations. Here we choose the {\it gradient restart} since it achieves comparable performance in practice as {\it function restart} but cost only $\mathcal{O}(d)$ operations.
    
\end{itemize}

\begin{figure} 
\centering
	\includegraphics[width=8.5cm]{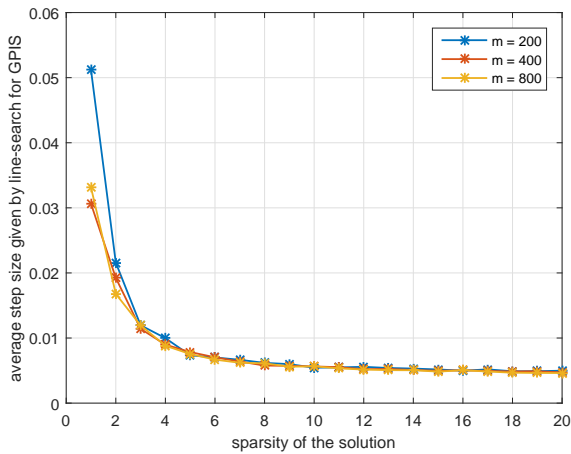}
\caption{Experimental results on the average choices of GPIS's step sizes given by line-search scheme \cite{nesterov2013gradient}}
\end{figure} 

\subsubsection{Procedure to generate synthetic data sets}

The procedure we used to generate a constrained least-square problem sized $n$ by 100 with approximately $s$-sparse solution and a condition number $\kappa$  strictly follows:

1) Generate a random matrix $A$ sized $n$ by 100 with i.i.d entries drawn from $\mathcal{N}(0, 1)$.

2) Calculate $A$'s SVD: $A = U\Sigma V^T$ and replace the singular values $diag(\Sigma)_i$ by a sequence:

\begin{equation}
    diag(\Sigma)_{i} = \frac{diag(\Sigma)_{i - 1}}{\kappa^{\frac{1}{d}}}
\end{equation}

3) Generate the ''ground truth'' vector $x_{gt}$  sized 100 by 1 randomly with only $s$ non-zero entries in a orthongonal transformed domain $\Phi$, and calculate the $l_1$ norm of it ($r = \|\Phi x_{gt}\|_1$). Hence the constrained set can be described as $\mathcal{K} = \{x: \|\Phi x\|_1 \leq r\}$.

4) Generate a random error vector $w$ with i.i.d entries such that $\frac{\|Ax_{gt}\|_2}{\|w\|_2} = 10$.

5) Set $y = A x_{gt} + w$

\begin{table}[t]
\caption{Synthetic data set for step size experiment}
\label{sample-table}
\vskip 0.15in
\begin{center}
\begin{small}
\begin{sc}
\begin{tabular}{lcccr}
\hline
\abovespace\belowspace
Data set & Size & s & $\Phi$\\
\hline
\abovespace
Syn4        & (20000, 100) &  -  & I \\
\hline
\end{tabular}
\end{sc}
\end{small}
\end{center}
\vskip -0.1in
\end{table}

\subsubsection{Extra experiment for step size choice}

We explore the step size choices the GPIS algorithm produce through using the line-search scheme with respect to different sparsity level of the solution. The result we shown is the average of 50 random trials.

The result of the step-size simulation demonstrates that the step sizes chosen on average by the line-search scheme for the GPIS algorithm is actually related with the sparsity of the ground truth $x_{gt}$: at a regime when the $x_{{gt}}$ is sparse enough, the step size one can achieve goes up rapidly w.r.t the sparsity. While in our Proposition 2 we revealed that the outerloop of GPIS/Acc-GPIS can benefit from the constrained set, and here surprisingly we also find out numerically that the inner loop's can also benefit from the constrained set by aggressively choosing the large step sizes. Such a result echos the analysis of the PGD algorithm on constrained Least-squares with a Gaussian map $A$ \cite{2015_Oymak_Sharp}. Further experiments and theoretical analysis of such greedy step sizes for sketched gradients and full gradients on general maps is of great interest and will go beyond the state of the art analysis for convex optimization.

\end{document}